 \newtheorem{theorem}{Theorem}[section]
 \newtheorem{corollary}[theorem]{Corollary}
 \newtheorem{lemma}[theorem]{Lemma}
 \newtheorem{proposition}[theorem]{Proposition}
 \theoremstyle{definition}
 \newtheorem{Definition}[theorem]{Definition}
 \newtheorem{remark}[theorem]{Remark}
 \newtheorem{example}[theorem]{Example}
 \numberwithin{equation}{section}
\begin{document}

\title[New characterization of $(b,c)$-inverses through polarity]{New characterization of $(b,c)$-inverses through polarity}
	\author{Btissam Laghmam \and Hassane Zguitti}	
\address{Department of Mathematics, Dhar El Mahraz Faculty of Science, Sidi Mohamed Ben Abdellah University, 30003 Fez Morocco.}
\email{btissam.laghmam@usmba.ac.ma}
\email{hassane.zguitti@usmba.ac.ma}
	
	\begin{abstract}
		In this paper we introduce the notion of $(b,c)$-polar elements in an associative ring $R$. Necessary and sufficient conditions of an element $a\in R$ to be $(b,c)$-polar are investigated. We show that an element $a\in R$ is $(b,c)$-polar if and only if $a$ is $(b,c)$-invertible. In particular the $(b,c)$-polarity is a generalization of the polarity along an element introduced by Song, Zhu and Mosi\'c \cite{polarity-along} if $b=c$, and the polarity introduced by Koliha and Patricio \cite{koliha}. Further characterizations are obtained in the Banach space context.\\

\noindent{\bf Keywords:}  $(b,c)$-inverse; inverse along an element; Drazin inverse, polarity along an element ; quasipolar element.\\
\noindent{\bf MSC:} 15A09; 16W10; 16U60; 47A05

	\end{abstract}	

	\maketitle

\section{introduction}

Throughout this paper, $R$ will denote an associative ring with unity $1$. An element $a \in R$ is \textit{regular} if $a\in aRa$ i.e., $a=axa$ for some $x\in R$. Any such  $x$ is called an \textit{inner inverse} of $a$. An inner inverse of $a$ will be denoted by $a^{-}$. We denote the set of all inner invertible elements in $R$ by $R^{-}$, while the group of units in $R$ is denoted by $R^{-1} $ and the set of all left invertible (respectively right invertible) elements in $R$ by $R_{l}^{-1}$ (respectively $R_{r}^{-1}$). An element  $a$ is {\it quasinilpotent} if $1+xa\in R^{-1}$ for all $x\in comm(a)$ \cite{Harte}. Let $R^{nil}$
and $R^{qnil}$ denote, respectively, the set of all nilpotent and quasinilpotent elements in $R$. 
For any $a \in R$ we define the \textit{commutant} and \textit{double commutant} of $a$ respectively by $$comm(a)=\{x\in R \, : \, ax=xa\}  $$
$$ comm^{2}(a)=\{x\in R \, : \, xy=yx,  \text{ for all }y \in comm(a)\}.$$

Following Drazin (\cite{Drazin1}) an element $a \in R$ is said to be \textit{Drazin invertible} if there exists $x\in R$ such that $$x\in comm(a), \;  xax=x \;  \mbox{ and }a^{k+1}x=a^{k}$$ for some nonnegative integer $k$. The element $x$ is unique if it exists and is called the {\it Drazin inverse} of $a$ and is denoted by $a^{D}$. The smallest nonnegative integer $k$ satisfying the above conditions is called the Drazin index of $a$, and denoted by $ind(a)$. The set of all Drazin invertible elements in $R$ is denoted by $R^{D}$. If $ind(a)\leq 1$, then $x$ is called the {\it group inverse} of $a$. It is denoted by $a^{\sharp}$.  We denote the set of all group invertible elements in $R$ by $R^{\sharp}$.
\smallskip

Koliha and Patricio \cite{koliha} extended the notion of Drazin inverse to generalized Drazin inverse:
an element $a\in R$  is \textit{generalized Drazin invertible} if there exists $b\in R$ such that
\begin{eqnarray} \label{gdrazin}
	b \in comm^{2}(a), \quad ab^{2}=b  ,  \quad a^{2}b-a \in R^{qnil},
\end{eqnarray}

Any element $b\in R$  satisfying those conditions in (\ref{gdrazin})  is unique and is called the \textit{g-Drazin inverse} of $a$;  and it is denoted by $a^{gD}$. The set of all g-Drazin invertible elements in $R$ is denoted by $R^{gD}$. 
Koliha and Patricio gave a criterion for (generalized) Drazin invertibility by  introducing the notion of polar and quasipolar elements. An element $a\in R$ is  \textit{quasipolar} (resp. {\it polar}) if there exists an idempotent $p\in R$ such that 
 \begin{equation} \label{eq3}
	p \in comm^2(a), \;      a+p \in R^{-1} \mbox{ and } ap\in R^{qnil}\,  (\mbox{resp. }ap\in R^{nil}).
\end{equation}
 The idempotent $p$ is unique and is called the \textit{spectral idempotent} of $a$ and is denoted by $a^{\pi}$. It is proved that $a$ is generalized Drazin invertible if and only if it is quasipolar, also $a$ is Drazin invertible if and only if $a$ is polar. In this case, $a^{gD}=(a+p)^{-1}(1-p)$.
\smallskip

Based on this characterization, Wang and  Chen in \cite{WC} introduced the notion of pseudopolarity. An element $a\in R$ is said to be pseudopolar if there exists an idempotent $p\in R$ such that 
\begin{equation}\label{eqpseudop}
	p \in comm^2(a), \;      a+p \in R^{-1} \mbox{ and } ap\in R^{rad};
\end{equation}
where $R^{rad}$ denotes the Jacobson radical of $R$.
Also the idempotent $p$ is unique if it exists. They also introduced a notion between the Drazin invertibility and generalized Drazin invertibility called the pseudo Drazin invertibility: an element $a$ is pseudo Drazin invertible if there exists $b\in R$ such that 
\begin{equation}\label{pD}
	b\in comm^2(a),\, bab=b\mbox{ and }a^k-a^{k+1}b\in R^{rad}.
\end{equation}
Such element is unique if it exists and is called the pseudo Drazin inverse of $a$. Moreover, $a$ is pseudo Drazin invertible if and only if $a$ is pseudopolar, \cite{WC}.
\smallskip

Mary \cite{mary} introduced a new  generalized inverse using Green's pre-orders and relations. An element $a \in R$ will be said to be invertible along $d\in R$ if there exists $y\in R$ such that $$yad=d=day , \;  yR\subseteq dR , \;  Ry \subseteq Rd.$$ Such an $y$ is unique if it exists and called \textit{the inverse of $a$ along $d$}, denoted by $a^{\|d}$. Moreover, if $a$ is invertible along $d$ then $d$ is regular. The set of all elements invertible along $d$ in $R$ is denoted by $R^{\|d}$.

Recently, to give a new characterization of the invertibility along an element, Song, Zhu and Mosi\'c \cite{polarity-along} provided  a definition for the concept of  the polarity along an element in $R$. Let $a , d \in R$, we say that $a$ is polar along $d$ if there exists some $p\in R$  such that
 $$p=p^{2}  \in comm(da) , \;  pd=d  \text{ and } 1+da-p \in R^{-1}.$$ Which is equivalent to $$p=p^{2}  \in comm(da) , \;  pd=d  \text{ and } p\in daRda.$$
 In this case $p$ is unique and is denoted by $a^{d\pi}$. 
It is also proved that $a$ is invertible along $d$  if and only if $a$ is polar along $d$. In this case, the inverse of $a$ along $d$ is given by $a^{\|d}=(1+da-p)^{-1}d$ and $p$ is also established via $p=a^{\|d}a$. Also $a$ is invertible along $d$ if and only if $a$ is dually polar along $d$. Recall that $a$ is dually polar along $d$ if exists some $q\in R$  such that
$$q=q^{2}  \in comm(ad) , \;  dq=d  \text{ and } 1+ad-q \in R^{-1}.$$ Which is equivalent to $$q=q^{2}  \in comm(ad) , \;  dq=d  \text{ and } q\in adRad.$$ In this case $q$ is unique and is denoted by $a_{d\pi}$.
\smallskip

In 2012, Drazin introduced a class of outer inverses \cite{outer} which extended inverses along elements and others. For any $a,b,c \in R$, $a$ is said to be \textit{$(b,c)$-invertible} if  there exists $y\in R$ such that
\begin{equation}\label{bcinverse}
	y\in bR\cap Rc \; ,\;  yab=b \; , \; cay =c .
\end{equation}
If such a $y$ exists, it is unique and  called the \textit{$(b,c)$-inverse} of $a$ and denoted by $a^{\|(b,c)}$. Also if $a$ is  $(b,c)$-invertible, then $b$, $c$, and $cab$ are regular. The set of all $(b,c)$-invertible elements in $R$ is denoted by $R^{\|(b,c)}$. In the case where $b=e$ and $c=f$ such that $e$ and $f$ are idempotents, we say that $a$ is $(e,f)$-{\it Bott-Duffin invertible} if $a$ is $(e,f)$-invertible \cite{outer}. Moreover the inverse along an element is a special case of the more general class of the $(b,c)$-inverse which  occurs when $b=c$, consequently we hold   $a^{\|d}=a^{\|(d,d)}$; $a^{D}=a^{\|(a^{k},a^{k})}$, where $k$ is the index of $a$; and in particular $a^{\sharp}= a^{\|(a,a)}$.
\smallskip

In \cite{uniqness proof} Drazin also  introduced the $(b, c)$-pseudo-polarity and the $ (b, c)$-pseudo-invertibility. He shows that under some condition, the two properties are equivalent. But in general, $(b, c)$-pseudo-polarity implies $ (b, c)$-pseudo-invertibility and the converse is not true. 
\smallskip

So it is natural to ask if there exists a kind of polarity which extends polarity and polarity along an element and also characterizes the $(b,c)$-invertibility (see also \cite{uniqness proof}). More precisely, the motivation for this paper arises from the following incomplete diagram of the related concepts: \\

{\small

\centering

\begin{tikzpicture}[
	block/.style={
		draw,
		fill=white,
		rectangle, 
		minimum width={width("$a$ invertible ")+2pt},
		font=\small,
		>=Stealth, auto,
	},
	>=Stealth, auto	]
	\node[block, align=left](spolar){$a$ is simply \\ polar};
	\node[block,above=0.5cm of spolar, align=left](grp){$a$ is groupe \\ invertible};
	
	\node[block,right=0.4 of spolar, align=left ](polar){$a$ is polar};
	\node[block,above=0.75cm of polar, align=left](Drazin){$a$  is Drazin \\ invertible};
	\node[block,right=0.5 of polar, align=left ](along){$a$ is polar along \\ an elemnt};
	\node[block,above=0.6cm of along, align=left](along2){$a$ is invertible\\ along an element};
	\node[block,right=0.5 of along, align=left ](?){?};
	\node[block,above=0.8cm of ?, align=left](bc){$a$ is $(b,c)$-\\invertible};	
	\node[block,right=0.4 of ?, align=left ](well){$a$ is well-supported};
	\node[block,above=0.8cm of well, align=left](Moore){$a$ is Moore-Penrose \\ invertible};

	\draw[->] (grp) -- (Drazin);
	\draw[->] (Drazin) -- (along2);
	\draw[->] (along2) -- (bc);
	\draw[->] (Moore) -- (bc);
	
	\draw[<->] (grp) -- (spolar);
	\draw[<->] (polar) -- (Drazin);
	\draw[<->] (along) -- (along2);
	\draw[<->] (?) -- (bc);
	\draw[<->] (well) -- (Moore);
	
	\draw[->] (spolar) -- (polar);
	\draw[->] (polar) -- (along);
	\draw[->] (along) -- (?);
	\draw[->] (well) -- (?);

\end{tikzpicture}
}
\\

Motivated by all these notions, we introduce in this paper the notion of $(b,c)$-polarity (Definition \ref{def1}). We show that when $b=c$, then $(b,b)$-polarity coincides with polarity along $b$; which extends then the polarity along an element. Moreover, we show that an element $a$ is $(b,c)$-polar if and only if $a$ is $(b,c)$-invertible. We give then a new characterization  of $(b,c)$-invertible elements. In section 3, we introduce the concept of dually $(b,c)$-polar elements as an extension of dually polar along an element. Among other things, we show that $a$ is dually $(b,c)$-polar if and only if $a$ is $(c,b)$-invertible. The last section is devoted to illustrate $(b,c)$-polarity in the context of Banach spaces.

\section{The $(b,c)$-Polarity}
	
	We start by introducing the concept of $(b,c)$-polarity.
	\begin{Definition}  \label{def1}
		Let $a$, $b$, $c$ $\in R$, we say that $a$ is $(b,c)$-{\it polar} if there exist $ p$, $q$ $\in R $ such that
		
		\begin{enumerate}
			\item $p^{2}= p \in bRca $;
			\item $q^{2}=q \in abRc $;
			\item 	$pb=b$, $cq=c$;
			\item $cap=ca$, $qab=ab$.
		\end{enumerate}
Any idempotent $p$ ( respectively $q$) satisfying the above conditions is  called a \textit{left $(b,c)$-spectral idempotent} of $a$ (respectively  a \textit{right $(b,c)$-spectral idempotent} of $a$).
	 	\end{Definition}
In the following we show the uniqueness of the left and right $(b,c)$-spectral idempotents of a $(b,c)$-polar element. 
\begin{theorem}
	\label{thm.1}
	Let $a$, $b$, $c \in R$ such that $a$ is $(b,c)$-polar. Then $a$ has a unique left $(b,c)$-spectral idempotent and a unique right $(b,c)$-spectral idempotent.		
\end{theorem}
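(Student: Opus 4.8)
The plan is to prove the uniqueness of the left idempotent and of the right idempotent separately; the two arguments are mirror images of each other, so I will describe the left case in full and indicate the right case briefly. The guiding idea is to take two candidate left $(b,c)$-spectral idempotents $p_1$ and $p_2$, compute the single mixed product $p_1 p_2$ in two different ways, and observe that one evaluation collapses to $p_1$ while the other collapses to $p_2$. The only ingredients needed are condition (1), which lets me factor each idempotent as $p_i = b r_i c a$ for suitable $r_i \in R$, together with the absorbing identities in (3) and (4).

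Suppose then that $p_1$ and $p_2$ are both left $(b,c)$-spectral idempotents of $a$, and write $p_1 = b r_1 c a$ and $p_2 = b r_2 c a$ using condition (1). First I would use the factored form of $p_1$ together with the identity $c a p_2 = c a$ from (4): this gives $p_1 p_2 = b r_1 (c a\, p_2) = b r_1 c a = p_1$. Next I would use the factored form of $p_2$ together with the identity $p_1 b = b$ from (3): this gives $p_1 p_2 = (p_1 b)\, r_2 c a = b r_2 c a = p_2$. Comparing the two evaluations of $p_1 p_2$ forces $p_1 = p_2$, which is the desired uniqueness.

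For the right idempotent the argument is completely analogous, pairing condition (2) with the other halves of (3) and (4). If $q_1$ and $q_2$ are both right $(b,c)$-spectral idempotents, I factor $q_i = a b s_i c$ by (2), then evaluate $q_1 q_2$ in two ways: using $q_1 a b = a b$ from (4) I get $q_1 q_2 = (q_1 a b)\, s_2 c = a b s_2 c = q_2$, while using $c q_2 = c$ from (3) I get $q_1 q_2 = a b s_1 (c\, q_2) = a b s_1 c = q_1$. Hence $q_1 = q_2$.

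I do not expect any genuine obstacle here. The single nonobvious point — which is really the whole content of the argument — is to pair the membership condition on one factor with the correct absorbing condition on the other factor, and to compute the \emph{same} mixed product $p_1 p_2$ (respectively $q_1 q_2$) in two ways. Once that pairing is spotted, each evaluation is a one-line associativity step, so there are no delicate computations to grind through.
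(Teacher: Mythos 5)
Your proof is correct and uses essentially the same mechanism as the paper's: factor each idempotent via condition (1) (resp.\ (2)) and collapse mixed products using the absorbing identities (3) and (4). Your version is in fact slightly more economical, since evaluating the single product $p_1p_2$ in two ways replaces the paper's computation of $p'p$, $pp'$ (twice), but the underlying idea is identical.
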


\begin{proof}

Suppose that $p$ and $ p^{\prime}$ are two left $(b,c)$-spectral idempotents of $a$ and $q$ and $ q^{\prime}$ are two right $(b,c)$-spectral idempotents of $a$.   \hfill \break
As $p \in bRca$, then $p=btca$ for some $t \in R$. 
It follows that $$ p-p^{\prime}p=btca-p^{\prime}btca=(b-p^{\prime}b)tca=0 \quad \text{(since  $b=pb=p^{\prime}b$) }.$$
So we obtain $$ p= p^{\prime}p.$$
Similary $p^{\prime}-pp^{\prime}=0$, and we  get $$ p^{\prime}=pp^{\prime}.$$
In other side, we have $ cap=ca=cap^{\prime}$, so  $ p-pp^{\prime}=btca-btcap^{\prime}=btca-btca=0$. 
 Hence $ p=pp^{\prime}$ and thus $p=p^{\prime}$. 
 \smallskip
 
 Similarly we show that $q=q^{\prime}$.
\end{proof}

If $a$ is $(b,c)$-polar then we denote the left $(b,c)$-spectral idempotent  $p$ by $a_{l}^{(b,c)\pi}$ and the right $(b,c)$-spectral idempotent  $q$ by $a_{r}^{(b,c)\pi}$.

\begin{example} \rm
	Let $R=\mathcal{M}_{2}(\mathbb{Z})$, and $a,b, c \in R$ such that
		
$$ a=	\left(
	\begin{array}{cc}
		0&0\\
		1&0\\
	\end{array}
	\right)  ;	
	 b= \left(
	\begin{array}{cc}
		1&-1\\
	 0&0\\
	\end{array}
	\right)  ;
	 c=\left(
	\begin{array}{cc}
		0&1\\
		0&1\\
	\end{array}
	\right).$$
	
	Then $a$ is $(b,c)$-polar with $p=a_{l}^{(b,c)\pi}= \left(
	\begin{array}{cc}
		1&0\\
		0&0\\
	\end{array}
	\right)$ and $q=a_{r}^{(b,c)\pi}= \left(
	\begin{array}{cc}
		0&0\\
		0&1\\
	\end{array}
	\right)$. 
	Indeed, a quick check, we obtain \begin{itemize}
		\item[i)] $p^{2}=p $ ; $q^{2}=q$.
		\item[ii)] $p=b\left(
		\begin{array}{cc}
			0&0\\
			-2&1\\
		\end{array}
		\right) ca $ and 
	$q=ab\left(
		\begin{array}{cc}
			1&0\\
			0&0\\
		\end{array}
		\right) c$.
		\item[iii)]  $pb=b$ ; $cq=c$.
		\item[iv)] $cap=ca$ ; $qab=ab$.
		
	\end{itemize}
		
\end{example}
Here we show that the $(b,c)$-polarity is an extension of the polarity along an element.
\begin{proposition}\label{propab}
	Let $a$ and $b\in R$. Then $a$ is $(b,b)$-polar if and only if $a$ is polar along $b$.
\end{proposition}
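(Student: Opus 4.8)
The plan is to prove both implications by matching the defining data and invoking the equivalence ``$a$ is invertible along $b$ if and only if $a$ is polar along $b$'' established in \cite{polarity-along}, together with the standard properties of the inverse along an element from \cite{mary}: if $y=a^{\|b}$ then $yab=b=bay$, $yay=y$ and $y\in bR\cap Rb$. I will also use the elementary observation that an idempotent lying in $uR\cap Rv$ automatically lies in $uRv$ (write $e=e^{2}$ with $e=u\alpha$ and $e=\beta v$, so $e=e^{2}=u(\alpha\beta)v$); this is what reconciles a one-sided membership such as $p\in bR$, $p\in Rba$ with the two-sided requirement $p\in bRba$.

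For the implication ``polar along $b$ $\Rightarrow$ $(b,b)$-polar'' I set $p=a^{\|b}a$ and $q=a\,a^{\|b}$. Conditions $(3)$ and $(4)$ of Definition \ref{def1} (with $c=b$) are immediate from $yab=b=bay$: for instance $pb=yab=b$ and $bap=ba(ya)=(bay)a=ba$, and symmetrically for $q$. The idempotency $p^{2}=p$, $q^{2}=q$ in $(1)$, $(2)$ follows from $yay=y$. The only nontrivial point is the membership $p\in bRba$ and $q\in abRb$: since $y\in bR\cap Rb$ one has $p=ya\in bR\cap Rba$ and $q=ay\in abR\cap Rb$, and the idempotent observation above upgrades these to $p\in bRba$ and $q\in abRb$.

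For the converse ``$(b,b)$-polar $\Rightarrow$ polar along $b$'' I intend to use both spectral idempotents at once. Writing $p=brba$ (from $(1)$) and $q=abub$ (from $(2)$), I form the two candidate inverses $y:=brb$ and $y':=bub$, both lying in $bRb\subseteq bR\cap Rb$. The relation $pb=b$ gives $yab=brbab=b$, while $bq=b$ gives $bay'=babub=b$; thus $y$ is a ``left'' and $y'$ a ``right'' inverse of $a$ along $b$. The crux is to show $y=y'$: from $bay'=b$ I may substitute $b=bay'$ into the last factor of $y=brb$, obtaining $y=br(bay')=(brba)y'=py'=(pb)(ub)=bub=y'$. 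Hence a single element $y=brb=bub\in bRb$ satisfies $yab=b=bay$ with $yR\subseteq bR$ and $Ry\subseteq Rb$, so $a$ is invertible along $b$; by \cite{polarity-along} it is therefore polar along $b$, and one checks $a^{\|b}a=ya=p$ so that $p$ is the expected idempotent.

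The main obstacle is precisely this last coincidence $y=y'$: neither spectral idempotent alone suffices, since $p$ only yields $yab=b$ and $q$ only yields $bay'=b$, and it is the interplay of conditions $(1)$--$(4)$ (through the identity $p\,y'=y'$) that forces the two one-sided inverses to collapse into a genuine inverse along $b$. By contrast, the forward implication is routine once the idempotent-membership lemma is in hand.
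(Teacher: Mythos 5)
Your proof is correct, but it takes a genuinely different route from the paper's. The paper never passes through the inverse along $b$: for the implication ``$(b,b)$-polar $\Rightarrow$ polar along $b$'' it checks directly that $p$ satisfies $p\in comm(ba)$ (using $bap=ba$ from condition (4) together with $pba=ba$ from $pb=b$), $pb=b$, and $p\in baRba$ --- the last via the substitution $p=bxba=(bq)xba\in babRbxba\subseteq baRba$ --- and then quotes the characterization of polarity along an element from \cite[Theorem 2.4]{polarity-along}; for the converse it invokes the equivalence between polarity and dual polarity along $b$ to produce both idempotents and reads off the four conditions of Definition \ref{def1}. You instead route both implications through invertibility along $b$: in one direction you set $p=a^{\|b}a$, $q=aa^{\|b}$ and upgrade the one-sided memberships with the observation that an idempotent in $uR\cap Rv$ lies in $uRv$; in the other you construct the inverse explicitly as $y=brb=bub$, your identity $y=py'=y'$ playing the role of the paper's substitution trick. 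Your converse is slightly more economical --- it never uses condition (4), whereas the paper needs $bap=ba$ to place $p$ in $comm(ba)$ --- and it has the bonus of exhibiting $a^{\|b}$ and verifying $p=a^{\|b}a$; the paper's argument stays entirely at the level of idempotents, which lets it cite the characterizations of \cite{polarity-along} verbatim. Both arguments ultimately lean on \cite{polarity-along} (you on ``polar $\Leftrightarrow$ invertible along $b$,'' the paper on Theorem 2.4 and ``polar $\Leftrightarrow$ dually polar''), so neither is essentially more self-contained; all steps in your version check out.
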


\begin{proof}
If $a$ is $(b,b)$-polar then we have 
$$p=p^{2}\in bRba \; ; \; q=q^{2}\in abRb \; ; \; pb=b = bq \; \text{  and } \; bap=ba=pba, $$ which implies $p=p^{2}\in comm(ba)$ and $pb=b$. Also, $p=bxba$ for some $x\in R$, then $p=bqxba \in babRbxba \subseteq baRba$.
Hence,  $a$ is polar along $b$ by \cite[ Theorem 2.4]{polarity-along}.	
\smallskip

Conversely, if $a$ is polar along $b$ then there exists a unique $p=p^2\in R$ such that $p\in comm(ba)$, $pb=b$ and $p\in baRba$. Then we have

\begin{equation}\label{eqbt1}
	\left\{\begin{array}{ccl}\,p&\in&baRba\subseteq bRba\\
		pb&=&b\\
		bap&=&pba=ba
	\end{array}
\right.
\end{equation}
On the other hand, we have $a$ is polar along $b$ if and only if $a$ is dually polar along $b$, then there exists a unique $q=q^2\in R$ such that $q\in comm(ab)$, $bq=b$ and $q\in abRab$. So 

\begin{equation}\label{eqbt2}
	\left\{\begin{array}{ccl}\,q&\in&abRab\subseteq abRb\\
		bq&=&b\\
		qab&=&abq=ab
	\end{array}
	\right.
\end{equation}
Now from (\ref{eqbt1}) and (\ref{eqbt2}), we obtain $a$ is $(b,b)$-polar.
\end{proof}

 \begin{lemma} \label{regular}
 	Let $a,b,c \in R$. If $a$ is $(b,c)$-polar, then $a$, $c $ and $cab$ are regular.
 		
 \end{lemma}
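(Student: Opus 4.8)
The plan is to turn the membership conditions (1)--(2) of Definition~\ref{def1} into explicit factorizations and then read inner inverses directly off the defining relations. Since $p=p^{2}\in bRca$ and $q=q^{2}\in abRc$, write $p=bsca$ and $q=abtc$ for suitable $s,t\in R$. Feeding these into (3)--(4) produces four reduced identities: $pb=b$ gives $b=bs\,cab$; $cq=c$ gives $c=cab\,tc$; $cap=ca$ gives $ca=cab\,s\,ca$; and $qab=ab$ gives $ab=ab\,t\,cab$.

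From here the regularity of $cab$ and $c$ is immediate. Right-multiplying $ca=cab\,s\,ca$ by $b$ yields $cab=cab\,s\,cab$, so $s$ is an inner inverse of $cab$; symmetrically, left-multiplying $ab=ab\,t\,cab$ by $c$ gives $cab=cab\,t\,cab$. For $c$, the chain $c=cq=cab\,tc=c(abt)c$ exhibits $abt$ as an inner inverse, so $c$ is regular (and, by the mirror computation, $b=pb=bs\,cab=b(sca)b$ shows $b$ is regular as well).

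The genuinely delicate claim is the regularity of $a$. The natural candidate for an inner inverse is $y=bsc$, which is in fact the prospective $(b,c)$-inverse: one checks $yab=bs\,cab=b$, then $cay=(cab)s\,c=cab\,s\,(cab\,tc)=(cab\,s\,cab)tc=cab\,tc=c$, and $ya=bs\,ca=p$. Consequently $aya=ap$, so proving $a=aya$ reduces to the single identity $ap=a$, equivalently $a(1-p)=0$. I expect this to be the main obstacle. Conditions (3)--(4) only supply the one-sided vanishings $ca(1-p)=0$ and $(1-q)ab=0$, and to strip the outer factors $c$ and $b$ one must bring in the second idempotent $q$. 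Concretely I would set $h=btc$, note $ah=ab\,tc=q$ and $qap=qa$, and try to combine $ya=p$ with $ah=q$ so that the left idempotent $p$ and the right idempotent $q$ are forced to act as genuine complementary projections through $a$; securing the passage from these one-sided identities to the two-sided identity $a(1-p)=0$ is where the real work lies.
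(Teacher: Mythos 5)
Your handling of $b$, $c$ and $cab$ is correct and is in substance the paper's own argument: the paper reads $b=pb\in bRcab\subseteq bRb$, $c=cq\in cabRc\subseteq cRc$ and $cab=(cq)a(pb)\in cabRcab$ directly off the membership conditions, which is exactly the computation you perform after writing $p=bsca$ and $q=abtc$.

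The third paragraph, however, pursues a claim that cannot be established, and your own diagnosis that the passage from the one-sided identities $ca(1-p)=0$ and $(1-q)ab=0$ to the two-sided identity $a(1-p)=0$ is ``where the real work lies'' is precisely the point at which the argument must fail: the $(b,c)$-inverse is only an outer inverse, and $(b,c)$-polarity does not force $a$ to be regular. Indeed, taking $b=c=0$, every element $a\in R$ is $(0,0)$-polar (with $p=q=0$), so any non-regular $a$ is a counterexample to the statement as literally written. The ``$a$'' in the lemma is a typo for ``$b$'': the paper's own proof establishes regularity of $b$, $c$ and $cab$ only, and the lemma is invoked later, in the proof of Theorem \ref{polarity}, exactly in the form ``$b$, $c$ and $cab$ are regular.'' You have in fact already proved the intended statement — your parenthetical $b=pb=b(sca)b$ is the regularity of $b$ — so the correct resolution is to drop the attempt on $a$ rather than to look for the missing idea.
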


\begin{proof}

 Suppose $a$ is $(b,c)$-polar. We have  $b=a_{l}^{(b,c)\pi}b \in bRcab \subseteq bRb$, which means that $b$ is regular and admits an inner inverse denoted by $b^{-}$.
 Also $c=ca_{r}^{(b,c)\pi}\in cabRc \subseteq cRc$. An inner inverse of $c$ will be denoted by $c^{-}$. And
 $cab=ca_{r}^{(b,c)\pi}aa_{l}^{(b,c)\pi}b \in cabRcabRcab \subseteq cabRcab$. An inner inverse of $cab$ will be denoted by $(cab)^{-}$.
\end{proof}

The following theorem shows the equivalence between the $(b,c)$-polarity and the $(b,c)$-invertibility.

 \begin{theorem} \label{polarity}
 	Let  $a, b,c \in R$. Then $a$ is $(b,c)$-polar if and only if $a$ is $(b,c)$-invertible.
 	
In this case we have
\begin{itemize}
\item[i)] $ p=a_{l}^{(b,c)\pi} = a^{\parallel (b,c)}a $.
 \item[ii)] $q=a_{r}^{(b,c)\pi}= aa^{\parallel (b,c)}$ .
\item[iii)] $a^{\|(b,c)}=(1+p-bb^{-})_{l}^{-1}b(cab)^{-}c=b(cab)^{-}c(1+q-c^{-}c)_{r}^{-1}$.
With $(1+p-bb^{-})_{l}^{-1}$ is a left inverse of $1+p-bb^{-}$ and $(1+q-c^{-}c)_{r}^{-1}$ is a right inverse of $1+q-c^{-}c$.
\end{itemize}
 \end{theorem}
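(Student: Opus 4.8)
The plan is to prove the equivalence by establishing both implications directly from the defining identities (\ref{bcinverse}) and Definition \ref{def1}, and then to extract the closed-form expressions in (iii) as a by-product.

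First I would treat the implication ``$(b,c)$-invertible $\Rightarrow$ $(b,c)$-polar''. Setting $y=a^{\|(b,c)}$, I would record the outer-inverse identity $yay=y$: writing $y=bu$ (possible since $y\in bR$), one has $yay=(yab)u=bu=y$. With this, the candidates $p:=ya$ and $q:=ay$ are idempotent, and conditions (3)--(4) of Definition \ref{def1} are immediate, since $pb=yab=b$, $cq=cay=c$, $cap=(cay)a=ca$ and $qab=a(yab)=ab$. The one delicate point is the membership (1)--(2): using the two factorizations $y=bu=vc$ coming from $y\in bR\cap Rc$ together with idempotency, I would write $p=p^2=(bua)(vca)=b(uav)ca\in bRca$ and $q=q^2=(abu)(avc)=ab(uav)c\in abRc$. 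This gives $(b,c)$-polarity, and Theorem \ref{thm.1} then identifies $p=ya$ and $q=ay$ as the unique left and right $(b,c)$-spectral idempotents, which is precisely (i) and (ii).

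For the converse ``$(b,c)$-polar $\Rightarrow$ $(b,c)$-invertible'', I would exploit the factorizations hidden in (1)--(2). Writing $p=btca$, the condition $pb=b$ yields $b=btcab\in bRcab$; dually $q=absc$ with $cq=c$ yields $c=cabsc\in cabRc$. Since $cab$ is regular by Lemma \ref{regular}, I fix an inner inverse $(cab)^-$ and propose $z:=b(cab)^-c$. Plainly $z\in bR\cap Rc$, while $zab=b(cab)^-(cab)=b$ (using $b=btcab$) and $caz=(cab)(cab)^-c=c$ (using $c=cabsc$). Thus $z$ satisfies (\ref{bcinverse}), so $a$ is $(b,c)$-invertible and, by uniqueness of the $(b,c)$-inverse, $a^{\|(b,c)}=b(cab)^-c$.

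Finally I would reconcile this with the stated formula. Putting $e=bb^-$, the relations $p\in bR$ and $pb=b$ force $ep=p$ and $pe=e$ (indeed $bb^-p=(bb^-b)tca=btca=p$ and $pbb^-=(pb)b^-=bb^-$); a short expansion then gives $(1+bb^--p)(1+p-bb^-)=1$, so $1+p-bb^-$ is left invertible with left inverse $1+bb^--p$, and this left inverse fixes $b$. Dually, from $q\in Rc$ and $cq=c$ one gets $1+q-c^-c$ right invertible with right inverse $1+c^-c-q$, fixing $c$ on the right. Hence both correction factors act as the identity on $b$ and on $c$ respectively, so the two expressions in (iii) collapse to $b(cab)^-c=a^{\|(b,c)}$. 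The main obstacle is less any single computation than the need, in the first implication, to combine both containments $y\in bR$ and $y\in Rc$ to obtain the refined memberships in $bRca$ and $abRc$, and, in the converse, to isolate the identities $bb^-p=p$ and $pbb^-=bb^-$ that make the one-sided invertibility of $1+p-bb^-$ transparent.
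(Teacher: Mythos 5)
Your argument is correct, and its skeleton matches the paper's: for ``invertible $\Rightarrow$ polar'' both you and the authors take $p=ya$, $q=ay$ (your explicit derivation of $yay=y$ from $y=bu$ and the factorization $p=p^2=(bua)(vca)\in bRca$ is exactly the step the paper leaves implicit), and for the converse both extract $b\in bRcab$, $c\in cabRc$ and build the inverse from an inner inverse of $cab$. The one genuine divergence is in the converse: the paper invokes \cite[Lemma 1]{Chen} to conclude $(b,c)$-invertibility and then separately manufactures the one-sided inverses of $1+p-bb^{-}$ and $1+q-c^{-}c$ to write the formula in (iii), whereas you verify directly that $z=b(cab)^{-}c$ satisfies $z\in bR\cap Rc$, $zab=b$, $caz=c$, which makes the proof self-contained, and you then observe that since $(1+p-bb^{-})b=b$ any left inverse of $1+p-bb^{-}$ fixes $b$ (dually for $c$), so the stated expressions in (iii) collapse to $b(cab)^{-}c$. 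Your route buys independence from the external lemma and makes transparent why the ``correction factors'' in (iii) are harmless; the paper's route keeps those factors because they mirror the formula $a^{\|d}=(1+da-p)^{-1}d$ from the polarity-along-an-element setting it is generalizing.
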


\begin{proof} \label{proof}

	 Suppose that $a$ is $(b,c)$-polar. To prove that $a$ is $(b,c)$-invertible, it suffices to prove that $b\in Rcab$ and $c\in cabR$ by \cite[Lemma 1]{Chen}. We have $$a_{l}^{(b,c)\pi} \in bRca,\, a_{r}^{(b,c)\pi} \in abRc,\, ca_{r}^{(b,c)\pi}=c\mbox{ and } a_{l}^{(b,c)\pi}b=b.$$ It follows that $$
	a_{l}^{(b,c)\pi}b \in bRcab \Rightarrow b\in bRcab \subseteq Rcab ,$$
	and $$ ca_{r}^{(b,c)\pi} \in cabRc \Rightarrow c\in cabRc \subseteq cabR.$$
	So $a$ is $(b,c)$-invertible.
\smallskip

	By Lemma \ref{regular} we have $b, c$ and $cab$ are regular with $b^{-}, c^{-}$ and $(cab)^{-}$  are inner inverses of $b , c$ and $cab$, respectively. Moreover we have $1+a_{l}^{(b,c)\pi}-bb^{-}$ is left invertible and $1+a_{r}^{(b,c)\pi}-c^{-}c $ is right invertible. Indeed, since $ a_{l}^{(b,c)\pi} \in bRca \subseteq bR$, $a_{l}^{(b,c)\pi}=bt $ for some $t\in R$ and we write $a_{l}^{(b,c)\pi}=bt=bb^{-}bt=bb^{-}a_{l}^{(b,c)\pi}$, so
	$$(bb^{-}+1-a_{l}^{(b,c)\pi})(1+a_{l}^{(b,c)\pi}-bb^{-})=1$$ which means that $1+a_{l}^{(b,c)\pi}-bb^{-}$ is left invertible, and we denote a left inverse of $1+a_{l}^{(b,c)\pi}-bb^{-}$ by $(1+a_{l}^{(b,c)\pi}-bb^{-})_{l}^{-1}$. Similary for $1+a_{r}^{(b,c)\pi}-c^{-}c$, as $a_{r}^{(b,c)\pi} \in abRc \subseteq Rc$ we write $a_{r}^{(b,c)\pi}=xc$ for some $x\in R$, so $a_{r}^{(b,c)\pi}=xc=xcc^{-}c=a_{r}^{(b,c)\pi}c^{-}c$ which implies $$(1+a_{r}^{(b,c)\pi}-c^{-}c)(c^{-}c+1-a_{r}^{(b,c)\pi})=1.$$ Hence $1+a_{r}^{(b,c)\pi}-c^{-}c$ is right invertible, we denote a right inverse of $1+a_{r}^{(b,c)\pi}-c^{-}c$   by $(1+a_{r}^{(b,c)\pi}-c^{-}c)_{r}^{-1}$.
\smallskip

	Now set $y=(1+a_{l}^{(b,c)\pi}-bb^{-})_{l}^{-1}b(cab)^{-}c=b(cab)^{-}c(1+a_{r}^{(b,c)\pi}-c^{-}c)_{r}^{-1}$. Then $yab=b$ ; $cay=c $ and $y\in bR\cap Rc $. Therefore, $y$ is the $(b,c)$-inverse of $a$.
\smallskip

	Conversely, suppose that $a$ is $(b,c)$-invertible  with $y$ as the $(b,c)$-inverse of $a$. Set $p=ya$ and $q=ay$, then $p$ is the left $(b,c)$-spectral idempotent of $a$ and $q$ is the right $(b,c)$-spectral idempotent of $a$. Indeed, we have
		$$
		p^{2} = yaya=ya=p \; \text{ and }
		q^{2} = ayay=ay =q .	
	$$
	
In other hand, as $y$ is the $(b,c)$-inverse of $a$ then $y \in bR\cap Rc$. Thus
	$$p =ya \in bRa \subseteq bR \; \text{   and    } p=ya \in Rca \; \text{  so,   } p=p^{2}\in bRca,  $$
	and $$ q=ay \in abR \; \text{ and  }  q=ay \in aRc \subseteq Rc \; \text{  so, } q=q^{2} \in abRc. $$
	We have  $pb= yab = b$ ; $cq= cay=c$ ;
	$cap=caya=ca$ and $qab=ayab=ab$.
	 Finally, $a$ is $(b,c)$-polar.
\end{proof}

Combining Proposition \ref{propab} and Theorem \ref{polarity} we retrieve the main result of \cite{polarity-along}.
	
\begin{corollary}
	Let $a , b \in R$. Then $a$ is polar along $b$ if and only if $a$ is invertible along $b$ if and only if $a$ is $(b,b)$-invertible if and only if $a$ is  $(b,b)$-polar.
\end{corollary}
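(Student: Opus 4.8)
The plan is to read off the four-way equivalence as a short chain of links, each supplied either by one of the two results just established or by a fact already recalled in the introduction; no new computation is needed. Concretely, I would isolate three equivalences to splice together: (polar along $b$) $\Leftrightarrow$ ($(b,b)$-polar), then ($(b,b)$-polar) $\Leftrightarrow$ ($(b,b)$-invertible), and finally ($(b,b)$-invertible) $\Leftrightarrow$ (invertible along $b$).

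First I would invoke Proposition \ref{propab}, which asserts verbatim that $a$ is $(b,b)$-polar if and only if $a$ is polar along $b$; this disposes of the first link. Next I would apply Theorem \ref{polarity} in the special case $c=b$, which states that $a$ is $(b,b)$-polar if and only if $a$ is $(b,b)$-invertible, giving the second link. For the third link I would appeal to the identification recorded in the introduction, namely that the inverse along an element is exactly the $(b,c)$-inverse when $c=b$, so that $a^{\|b}=a^{\|(b,b)}$; consequently $a$ is invertible along $b$ if and only if $a$ is $(b,b)$-invertible. Concatenating these three equivalences produces precisely the asserted chain
\[
a \text{ polar along } b \iff a \text{ invertible along } b \iff a \text{ is } (b,b)\text{-invertible} \iff a \text{ is } (b,b)\text{-polar}.
\]

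I do not expect any genuine obstacle here: the corollary is a formal consequence of statements already proved or cited, and the only point requiring care is to attach each equivalence to its correct justification. It is worth remarking that the composite equivalence (polar along $b$) $\Leftrightarrow$ (invertible along $b$) is itself the main theorem of \cite{polarity-along}, so the corollary does recover that result as a by-product, exactly as advertised in the sentence preceding its statement.
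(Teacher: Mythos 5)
Your proposal is correct and matches the paper's argument exactly: the paper derives this corollary by combining Proposition \ref{propab} with Theorem \ref{polarity} (in the case $c=b$) and the identification $a^{\|b}=a^{\|(b,b)}$ recalled in the introduction. Nothing further is needed.
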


\begin{Definition}
	Given $a, e, f\in R$, we say $a$ is $(e,f)$-{\it Bott-Duffin polar} if $a$ is $(e,f)$-polar and  $e$ and $f$ are idempotents.
\end{Definition}
  \begin{proposition}
  	Let $a,b,c \in R$. If  $a$ is $(b,c)$-polar then $a$ is $(e,f)$-Bott-Duffin polar with $e=a_{l}^{(b,c)\pi}$ and $f=a_{r}^{(b,c)\pi}.$
  	
  \end{proposition}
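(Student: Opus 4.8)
The plan is to avoid reverifying Definition~\ref{def1} from scratch and instead lean on the equivalence between $(b,c)$-polarity and $(b,c)$-invertibility from Theorem~\ref{polarity}. Since $a$ is $(b,c)$-polar, Theorem~\ref{polarity} shows $a$ is $(b,c)$-invertible; I set $y=a^{\|(b,c)}$ and recall from that same theorem that $e=a_{l}^{(b,c)\pi}=ya$ and $f=a_{r}^{(b,c)\pi}=ay$. By Definition~\ref{def1}(1)--(2) both $e$ and $f$ are idempotent, so, in view of the definition of $(e,f)$-Bott-Duffin polarity, the whole task reduces to showing that $a$ is $(e,f)$-polar.

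For this I would invoke Theorem~\ref{polarity} once more, this time for the triple $(a,e,f)$: it is enough to prove that $a$ is $(e,f)$-invertible, and I claim the element $y$ is itself the $(e,f)$-inverse of $a$. The one preliminary identity I need is the outer-inverse relation $yay=y$, which I obtain by writing $y=bs$ (possible since $y\in bR$) and computing $yay=(yab)s=bs=y$ via $yab=b$. From $yay=y$ I then read off $y=(ya)y=ey\in eR$ and $y=y(ay)=yf\in Rf$, so $y\in eR\cap Rf$; and using the idempotency of $e=ya$ and $f=ay$ I get $yae=(ya)e=e^{2}=e$ and $fay=f(ay)=f^{2}=f$. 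These are exactly the defining relations of the $(e,f)$-inverse, so $a$ is $(e,f)$-invertible, and Theorem~\ref{polarity} returns that $a$ is $(e,f)$-polar. Together with the idempotency of $e,f$ this is precisely the assertion that $a$ is $(e,f)$-Bott-Duffin polar.

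I do not expect a genuine obstacle: the only substantive observation is that the single element $y=a^{\|(b,c)}$ simultaneously serves as the $(e,f)$-inverse, everything else collapsing through $yay=y$ and the idempotency of $e=ya$, $f=ay$. Should one prefer a route not passing through Theorem~\ref{polarity} in the second step, I would instead check Definition~\ref{def1}(1)--(4) for $(a,e,f)$ directly, taking the left and right $(e,f)$-spectral idempotents to be $e$ and $f$ themselves: here (3) is immediate from $e^{2}=e$ and $f^{2}=f$, (4) reduces to the single identity $fae=fa=ae$ (equivalently $yaya=ya$), and (1)--(2) follow from $e=e\,y\,(fa)=(ya)^{3}\in eRfa$ and $f=(ae)\,y\,f=(ay)^{3}\in aeRf$.
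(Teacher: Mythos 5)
Your proposal is correct, and in fact it supplies more detail than the paper does. The paper's own proof is a one-liner: it simply declares that the left and right $(e,f)$-spectral idempotents can be taken to be $e=a_{l}^{(b,c)\pi}$ and $f=a_{r}^{(b,c)\pi}$ themselves, leaving the verification of Definition~\ref{def1} for the triple $(a,e,f)$ implicit. Your secondary route is exactly that verification, carried out: the identities $e\in eRfa$, $f\in aeRf$, $e^2=e$, $f^2=f$, and $fae=fa=ae$ all collapse to powers of $ya$ and $ay$, which is precisely what the paper is gesturing at. Your primary route is genuinely different in flavor: rather than checking the four axioms, you pass through Theorem~\ref{polarity} twice, observing that the single element $y=a^{\|(b,c)}$ also satisfies $y\in eR\cap Rf$, $yae=e$, $fay=f$, hence serves as the $(e,f)$-inverse. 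That detour buys you a cleaner conceptual statement (the $(b,c)$-inverse and the $(e,f)$-Bott--Duffin inverse coincide) at the cost of invoking the equivalence theorem in both directions; the direct check is more elementary and is what the paper intends. All the computations you give ($yay=y$ via $y=bs$ and $yab=b$, and the subsequent idempotent manipulations) are sound.
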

\begin{proof}

Set $a_{l}^{(e,f)}=a_{l}^{(b,c)}$ and $a_{r}^{(e,f)}=a_{r}^{(b,c)}$, then we obtain the result.
 \end{proof}

 \begin{corollary}
 	Let $a, e,f \in R$. Then
 	$a$ is $(e,f)$-Bott-Duffin polar if and only if $a$ is $(e,f)$-Bott-Duffin invertible.
 \end{corollary}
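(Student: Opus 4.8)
The plan is to recognize that this corollary is just the specialization of Theorem \ref{polarity} to the case $b=e$, $c=f$, with the extra idempotency hypothesis on $e$ and $f$ carried along unchanged on both sides of the equivalence. First I would unwind the definitions: by the Definition preceding the statement, saying that $a$ is $(e,f)$-Bott-Duffin polar means exactly that $a$ is $(e,f)$-polar and that $e$ and $f$ are idempotents; likewise, as recalled in the introduction, $a$ is $(e,f)$-Bott-Duffin invertible means that $a$ is $(e,f)$-invertible together with $e$ and $f$ being idempotents. Thus the requirement ``$e$ and $f$ are idempotents'' is literally common to both notions and contributes nothing to the core equivalence.

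Next I would apply Theorem \ref{polarity} with $b=e$ and $c=f$, which states that $a$ is $(e,f)$-polar if and only if $a$ is $(e,f)$-invertible. Conjoining the shared side-condition that $e$ and $f$ be idempotents to each half of this biconditional then produces precisely the asserted equivalence: $a$ is $(e,f)$-polar with $e,f$ idempotent if and only if $a$ is $(e,f)$-invertible with $e,f$ idempotent, i.e. $a$ is $(e,f)$-Bott-Duffin polar if and only if $a$ is $(e,f)$-Bott-Duffin invertible.

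I expect no genuine obstacle here, since the result is an immediate corollary rather than a new argument. The only point worth checking is that the defining conditions of $(b,c)$-polarity in Definition \ref{def1} remain exactly the same when $b$ and $c$ are themselves idempotents, so that adjoining the idempotency hypothesis does not silently modify either property; as the Bott-Duffin definitions merely append this requirement without touching the underlying $(b,c)$-polar or $(b,c)$-invertible conditions, no compatibility issue arises and the equivalence transfers verbatim.
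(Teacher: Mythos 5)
Your proof is correct and matches the paper's intent: the corollary is stated without proof precisely because it is the immediate specialization of Theorem \ref{polarity} to $b=e$, $c=f$, with the idempotency hypothesis appearing identically on both sides of the equivalence. Nothing further is needed.
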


\section{The dual $(b,c)$-polarity}

\begin{Definition}
	 Let  $a$, $b$ and $c \in R$. We say that $a$ \textit{is dually $(b,c)$-polar} if there exist $r, s \in R$ such that
	 \begin{enumerate}
	 	\item $r^{2}=r \in acRb$ ;
	 	\item $s^{2}=s \in cRba$ ;
	 	\item $br=b$ and $sc=c$ ;
	 	\item $rac=ac$ and $bas=ba$.	
	 \end{enumerate}
\end{Definition}
Any idempotent $r$ (respectively $s$) satisfying the above conditions is  called a \textit{  dual right $(b,c)$-spectral idempotent of $a$} (respectively  a dual \textit{ left $(b,c)$-spectral idempotent of $a$}).
\begin{theorem}
	Let   $a,b ,c \in R$ such that $a$ is dually $(b,c)$-polar. Then $a$ has a unique dual right $(b,c)$-spectral idempotent and a unique dual left $(b,c)$-spectral idempotent.
\end{theorem}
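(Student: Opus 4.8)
The plan is to mirror the argument of Theorem \ref{thm.1}, exploiting the fact that the defining conditions for a dually $(b,c)$-polar element are the formal left-right transpose of those for a $(b,c)$-polar element, with the data playing swapped roles. I will treat the uniqueness of the dual right spectral idempotent and of the dual left spectral idempotent separately. In each case the strategy is the same two-move idea already present in Theorem \ref{thm.1}: use the membership condition ((1) or (2)) to obtain an explicit factorization of each candidate idempotent, then evaluate a single product of the two candidates in two different ways, using condition (3) to collapse one expression and condition (4) to collapse the other.

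For the dual right spectral idempotent, suppose $r$ and $r'$ both satisfy (1)--(4). From $r \in acRb$ I write $r = actb$ and $r' = act'b$ for suitable $t, t' \in R$. I then compute the product $rr'$ twice. On one hand, using $br' = b$ from (3), I expect $rr' = actb\,r' = act(br') = actb = r$. On the other hand, using $rac = ac$ from (4), I expect $rr' = r\,act'b = (rac)t'b = act'b = r'$. Comparing the two values of $rr'$ gives $r = r'$.

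For the dual left spectral idempotent, the computation is the exact analogue under the substitutions $b \mapsto c$ and $ca \mapsto ba$ relative to the proof of Theorem \ref{thm.1}. Writing $s = cuba$ and $s' = cu'ba$ from $s \in cRba$, I evaluate $ss'$ twice: using $sc = c$ from (3) gives $ss' = s\,cu'ba = (sc)u'ba = cu'ba = s'$, while using $bas' = ba$ from (4) gives $ss' = cuba\,s' = cu(bas') = cuba = s$. Hence $s = s'$.

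I do not anticipate any genuine obstacle, since the statement is the formal dual of Theorem \ref{thm.1} and each displayed equality reduces to a one-line cancellation once the correct factorization is substituted. The only point demanding mild care is bookkeeping: in each of the two evaluations I must attack the factor that is actually absorbed by the available identity (the trailing $b$ next to $r'$, or the leading $c$ next to the left action), so that condition (3) disposes of one side and condition (4) of the other. With the transposed structure kept straight, the two evaluations of the common product coincide and uniqueness follows at once.
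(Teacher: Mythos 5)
Your proof is correct and follows essentially the same approach as the paper's: factor each candidate idempotent through $acRb$ (resp.\ $cRba$) and collapse products using the absorption identities $br=b$, $rac=ac$ (resp.\ $sc=c$, $bas=ba$). The only difference is cosmetic --- you evaluate the single product $rr'$ (resp.\ $ss'$) in two ways, whereas the paper computes $r'r=r$ and $r'r=r'$ in separate steps before chaining; both yield the same one-line cancellations.
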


\begin{proof}

 Suppose that $r$ and $r^{\prime} $ are two dual right $(b,c)$-spectral idempotent of $a$. Then we have 
 $$ r-r^{\prime}r=actb-r^{\prime}actb=actb-actb=0,$$ for some $ t\in R$.
Then $r=r^{\prime}r$. Also $$ r^{\prime}-rr^{\prime}=acxb-racxb=acxb-acxb=0,$$ for some $x\in R$. Hence $r^{\prime}=rr^{\prime}$. Consequently, we have $$ r^{\prime}r-r^{\prime}=acxbr-acxb=acx(br-b)=0,$$ so $r^{\prime}r=r^{\prime}$. Therefore we get $r^{\prime}=r$. 
\smallskip

By the same way we prove the uniqueness of the dual left $(b,c)$-spectral idempotent of $a$.
 \end{proof}

 We denote  the dual right $(b,c)$-spectral idempotent of $a$ by $ r=a_{(b,c)\pi} ^{r}$ and the dual left $(b,c)$-spectral idempotent of $a$ by $s=a_{(b,c)\pi}^{l}$.

 \begin{lemma}
 	Let $a,b,c \in R$. If $a$ is dually $(b,c)$-polar then $b,c$ and $bac$ are regular.
 \end{lemma}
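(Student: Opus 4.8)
The plan is to mirror the proof of Lemma~\ref{regular} exactly, exploiting the symmetry between the $(b,c)$-polarity conditions and the dual $(b,c)$-polarity conditions. In the original lemma the roles of $b$ and $c$ were governed by the idempotents $p \in bRca$ (with $pb=b$) and $q \in abRc$ (with $cq=c$); in the dual setting the defining conditions place $r^2=r \in acRb$ with $br=b$, and $s^2=s \in cRba$ with $sc=c$. So I would first record that $a$ is assumed dually $(b,c)$-polar, whence the dual right idempotent $r=a_{(b,c)\pi}^{r}$ and the dual left idempotent $s=a_{(b,c)\pi}^{l}$ exist and satisfy conditions (1)--(4) of the definition.

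Next I would establish regularity of $b$. Using condition (3), $b=br$, and since $r \in acRb$ we may write $r=actb$ for some $t\in R$; substituting gives $b=br=b(actb)=(bac)tb \in bacRb \subseteq bRb$, so $b$ is regular. Symmetrically, for $c$ I would use $c=sc$ together with $s \in cRba$, writing $s=cxba$ for some $x\in R$, to obtain $c=sc=(cxba)c=cx(bac) \in cRbac \subseteq cRc$, giving regularity of $c$. These two computations are the direct analogues of the $b=pb \in bRcab$ and $c=cq \in cabRc$ steps in Lemma~\ref{regular}, with $cab$ replaced throughout by $bac$.

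Finally, for the regularity of $bac$ I would combine both idempotents. Starting from $b=br$ and $c=sc$, I would write $bac=b\,a\,c = (br)\,a\,(sc)$, and then substitute $r=actb$ and $s=cxba$ to pull out factors of $bac$ on each side, yielding an expression lying in $bacR\,(bac)\,Rbac \subseteq bacRbac$; this exhibits an inner inverse of $bac$. I would denote the respective inner inverses by $b^{-}$, $c^{-}$ and $(bac)^{-}$.

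I expect the only point requiring any care to be the bookkeeping in the $bac$ step: one must insert the idempotent substitutions in the correct order so that the middle factor genuinely appears as $bac$ rather than as a rearranged product, but since $r$ sits on the right of $b$ and $s$ sits on the left of $c$, the factorizations $b=br=bactb$ and $c=sc=cxbac$ slot in cleanly and no associativity obstruction arises. Everything else is routine manipulation of Green's-relation inclusions, so there is no genuine obstacle here.
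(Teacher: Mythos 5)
Your proof is correct and is exactly what the paper intends: the paper's own proof of this lemma is just the remark that it is ``similar to the proof of Lemma \ref{regular}'', and your argument is precisely that mirroring, with $b=br=bactb\in bRb$, $c=sc=cxbac\in cRc$, and $bac=(br)a(sc)\in bacRbac$ all checking out. No gaps.
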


\begin{proof}

 It is similar to the proof of the Lemma \ref{regular}.
 \end{proof}

\begin{theorem}
	Let  $a , b ,c \in R$. Then $a$ is dually $(b,c)$-polar if and only if $a$ is $(c,b)$-invertible.
	
In this case we have
\begin{enumerate}

  \item[i)]  $r=a_{(b,c)\pi}^{r}=aa^{\|(c,b)} $.
 \item[ii)]  $s=a_{(b,c)\pi}^{l}=a^{\|(c,b)}a$.
  \item[iii)]  $a^{\|(c,b)}=(1+a_{(b,c)\pi}^{l}-cc^{-})_{l}^{-1}c(bac)^{-}b=c(bac)^{-}b(1+a_{(b,c)\pi}^{r}-b^{-}b)_{r}^{-1}$.
\end{enumerate}
\end{theorem}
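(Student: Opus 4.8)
The plan is to deduce the whole statement from Theorem \ref{polarity} by exploiting the formal symmetry $b\leftrightarrow c$. The first step is to observe that the four defining conditions of dual $(b,c)$-polarity are exactly the image, under the interchange of $b$ and $c$, of the four conditions of Definition \ref{def1}. Indeed, substituting $b\mapsto c$ and $c\mapsto b$ into Definition \ref{def1} turns condition (1), $p^{2}=p\in bRca$, into $p^{2}=p\in cRba$, which is the dual condition on $s$; it turns condition (2), $q^{2}=q\in abRc$, into $q^{2}=q\in acRb$, the dual condition on $r$; and the relations $pb=b,\,cq=c,\,cap=ca,\,qab=ab$ become $pc=c,\,bq=b,\,bap=ba,\,qac=ac$, which are precisely the remaining dual conditions. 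Reading off the correspondence, I would record the crossed identification
\[
s=a_{l}^{(c,b)\pi}\qquad\text{and}\qquad r=a_{r}^{(c,b)\pi},
\]
so that $a$ is dually $(b,c)$-polar if and only if $a$ is $(c,b)$-polar, with the dual left idempotent $s$ playing the role of the left $(c,b)$-spectral idempotent and the dual right idempotent $r$ that of the right one.

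With this dictionary fixed, the theorem follows by transporting Theorem \ref{polarity} across the involution. Applying that theorem with the pair $(c,b)$ in place of $(b,c)$ gives that $a$ is $(c,b)$-polar --- hence dually $(b,c)$-polar --- if and only if $a$ is $(c,b)$-invertible, which settles the equivalence. For the explicit expressions I would simply substitute $b\leftrightarrow c$ in the conclusions of Theorem \ref{polarity}: its part (ii) reads $a_{r}^{(c,b)\pi}=aa^{\|(c,b)}$, whence $r=aa^{\|(c,b)}$, giving (i); its part (i) reads $a_{l}^{(c,b)\pi}=a^{\|(c,b)}a$, whence $s=a^{\|(c,b)}a$, giving (ii). The regularity of $c$, $b$ and $bac$ needed for (iii) is delivered by Lemma \ref{regular} applied to $(c,b)$, and the same substitution carries the formula of Theorem \ref{polarity}(iii) into
\[
a^{\|(c,b)}=(1+a_{l}^{(c,b)\pi}-cc^{-})_{l}^{-1}c(bac)^{-}b=c(bac)^{-}b(1+a_{r}^{(c,b)\pi}-b^{-}b)_{r}^{-1};
\]
replacing $a_{l}^{(c,b)\pi}$ by $s=a_{(b,c)\pi}^{l}$ and $a_{r}^{(c,b)\pi}$ by $r=a_{(b,c)\pi}^{r}$ yields exactly the claimed two-sided expression for $a^{\|(c,b)}$.

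The one point that demands genuine care --- and which I regard as the main obstacle --- is making the symmetry dictionary watertight: each clause of the dual definition must be checked to be the $b\leftrightarrow c$ image of a clause of Definition \ref{def1}, and in particular one must not confuse $r$ with $s$, since the labels $l$ and $r$ are retained while $b$ and $c$ are swapped, so the dual \emph{right} idempotent corresponds to the $(c,b)$-\emph{right} idempotent and the dual \emph{left} idempotent to the $(c,b)$-\emph{left} one. Once this bookkeeping is secured, no computation beyond the substitution is required, and the uniqueness of $r$ and $s$ already proved guarantees that the idempotents produced on the $(c,b)$-side are precisely the dual spectral idempotents named in the statement.
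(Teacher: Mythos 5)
Your proof is correct, and the dictionary you set up is watertight: the four clauses defining dual $(b,c)$-polarity are exactly the $b\leftrightarrow c$ images of the four clauses of Definition \ref{def1}, with $s$ matching the left $(c,b)$-spectral idempotent ($s\in cRba$, $sc=c$, $bas=ba$) and $r$ the right one ($r\in acRb$, $br=b$, $rac=ac$), so the entire theorem, including the two-sided formula in (iii), follows by specializing Theorem \ref{polarity} and Lemma \ref{regular} to the pair $(c,b)$. The paper takes a slightly less economical route: it proves the forward implication by a direct computation (from $r\in acRb$ and $br=b$ it gets $b\in bacRb\subseteq bacR$, from $s\in cRba$ and $sc=c$ it gets $c\in Rbac$, and then invokes the criterion for $(c,b)$-invertibility), and for the formulas and the converse it sets $r=aa^{\|(c,b)}$, $s=a^{\|(c,b)}a$ and says to ``follow the same procedure'' as the proof of Theorem \ref{polarity}. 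Your reduction by relabeling compresses all of that into a single substitution and has the added benefit of exposing Corollary \ref{corol1} as a definitional tautology rather than a consequence of two separate theorems; the paper's version re-runs part of the computation but never has to argue explicitly that the two definitions are formally interchangeable. Either way the content is the same, and your bookkeeping of which dual idempotent corresponds to which $(c,b)$-spectral idempotent is exactly right.
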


\begin{proof}

Suppose that $a$ is dually $(b,c)$-polar, then there exist $ r, s\in R$ such that 
$$ \begin{aligned}
	br&=b  \quad \text{and } \quad r\in acRb, \\
	sc&=c \quad \text{and } \quad s\in cRba.
\end{aligned}
$$
Then $br \in bacRb $, which implies that $ b \in bacRb \subseteq bacR$. Also $sc \in cRbac$, which means that $ c\in cRbac \subseteq Rbac$. Thus $a$ is $(c,b)$-invertible .
\smallskip

To obtain the formulas of the $(c,b)$-inverse of $a$, we follow the same procedure of the proof of Theorem \ref{polarity}.
\smallskip

Conversely, suppose that $a$ is $(c,b)$-invertible, then we set $r=aa^{\| (c,b)}$ and $s=a^{\| (c,b)}a$. Follow the same procedure of the proof of  Theorem  \ref{polarity}, we obtain that $a$ is dually $(b,c)$-polar with $r$ (respectively $s$) its dual right $(b,c)$-spectral idempotent (dual left $(b,c)$-spectral idempotent).
\end{proof}

It maybe that $a$ is dually $(b,c)$-polar but it is not $(b,c)$-polar as shown by the following example.

 \begin{example}\rm \label{exmple3.5}
	
	Let $R=\mathcal{M}_{2}(\mathbb{N})$, and $a,b,c \in R$ such that

	$$ a=	\left(
	\begin{array}{cc}
		1&0\\
		0&0\\
	\end{array}
	\right)  ,\,	
	 b= \left(
	\begin{array}{cc}
		0&0\\
		1&0\\
	\end{array}
	\right)  \mbox{ and }
	 c=\left(
	\begin{array}{cc}
		0&1\\
		0&0\\
	\end{array}
	\right) .$$
	
	Then $a$ is dually $(b,c)$-polar with $r= a_{(b,c)\pi}^{r}= s=a_{(b,c)\pi}^{l}=a$. Indeed, a quick check, we obtain \begin{itemize}
		\item[i)] $r^{2}=r $; 
		\item $r=ac\left(
		\begin{array}{cc}
			x_{1}&x_{2}\\
			x_{3}&1\\
		\end{array}
		\right) b $,  with $x_{1} $, $x_{2}$ and $x_{3}$ are arbitrary elements of $\mathbb{N}$;
		\item $s^{2}=s$; $s=c \left(
		\begin{array}{cc}
			t_{1}&t_{2}\\
			t_{3}&1\\
		\end{array}
		\right) ba$, with $t_{1}$, $t_{2}$ and $t_{3}$ are arbitrary elements of $\mathbb{N}$;
		\item $br=b$ and  $sc=c$;
		\item $rac=ac$ and  $bas=ba$.
		
	\end{itemize}	
	Notice that in this example, $a$ is not $(b,c)$-polar because $a$ is not $(b,c)$-invertible since $ab=0$.
\end{example}

\begin{corollary}
	\label{corol1}
	Let  $a,b,c \in R$. Then  $a$ is $(b,c)$-polar if and only if $a$ is dually $(c,b)$-polar.

\end{corollary}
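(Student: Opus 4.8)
The plan is to obtain this equivalence by chaining the two characterization theorems already established in the paper, rather than building the relevant spectral idempotents by hand. The key observation is that both $(b,c)$-polarity and dual $(c,b)$-polarity have already been tied to a one-sided invertibility condition, so it suffices to note that those invertibility conditions coincide.

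First I would recall that by Theorem \ref{polarity}, $a$ is $(b,c)$-polar if and only if $a$ is $(b,c)$-invertible. Next I would invoke the theorem characterizing dual polarity, which asserts that $a$ is dually $(b,c)$-polar if and only if $a$ is $(c,b)$-invertible; applying this statement with the pair $(b,c)$ replaced throughout by $(c,b)$ gives that $a$ is dually $(c,b)$-polar if and only if $a$ is $(b,c)$-invertible. The two equivalences now share the common middle condition ``$a$ is $(b,c)$-invertible'', so combining them yields at once that $a$ is $(b,c)$-polar if and only if $a$ is dually $(c,b)$-polar, which is exactly the assertion.

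The only point demanding care --- and the closest thing to an obstacle --- is the bookkeeping of the index pair: one must perform the substitution $b \leftrightarrow c$ correctly when invoking the dual theorem, so that the invertibility sides of the two equivalences genuinely match (both being $(b,c)$-invertibility rather than $(c,b)$-invertibility). There is no analytic or algebraic difficulty, since each half is already proved. In fact, a fully self-contained route is equally immediate: reading the defining conditions of dual $(c,b)$-polarity off the definition with $b \leftrightarrow c$ swapped, one finds they coincide verbatim with the conditions defining $(b,c)$-polarity, via the identification $r = q = a_{r}^{(b,c)\pi}$ and $s = p = a_{l}^{(b,c)\pi}$; each membership, absorption, and commutation condition transcribes term-by-term under the swap. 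I would lead with the chaining argument for brevity and mention this definitional matching as the reason the corollary is essentially formal.
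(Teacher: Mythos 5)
Your proposal is correct and its main argument is exactly the paper's proof: chain Theorem \ref{polarity} ($(b,c)$-polar $\Leftrightarrow$ $(b,c)$-invertible) with the dual characterization applied to the pair $(c,b)$ (dually $(c,b)$-polar $\Leftrightarrow$ $(b,c)$-invertible). Your additional observation that the defining conditions of dual $(c,b)$-polarity transcribe verbatim into those of $(b,c)$-polarity under $r=q$, $s=p$ is also accurate, though the paper does not include it.
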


 \begin{proof}

We have  $a$ is $(b,c)$-polar $\Longleftrightarrow a$ is $(b,c)$-invertible and
  $ a $ is dually $(c,b)$-polar $\Longleftrightarrow a $ is $(b,c)$-invertible.
 So we get that $a$ is $(b,c)$-polar if and only if $a$ is dually $(c,b)$-polar.

 \end{proof}

\begin{theorem} \label{thm2}
	Let  $a, b, c \in R$. If $a$ is both $(b,c)$ and $(c,b)$-invertible such that $aba\in comm(c)$ and $aca\in comm(b)$, then  we have
	
	\begin{enumerate}
		\item  $a_{l}^{(b,c)\pi} = a_{(b,c)\pi} ^{r}$.
		\item $ a_{r}^{(b,c)\pi} = a_{(b,c)\pi} ^{l}$.
		\item $a^{\|(b,c)} =ba(caba+1-(aba)^{c\pi})^{-1}c=(baca+1-(aca)^{c\pi})^{-1}bac$.
		\item $a^{\|(c,b)}=(caba+1-(aba)^{c\pi})^{-1}cab=ca(baca+1-(aca)^{b\pi})^{-1}b$.

	\end{enumerate}
\end{theorem}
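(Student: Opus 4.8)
The plan is to first put the two hypotheses into a computational form and to exploit a symmetry. Writing $m=cab$ and $n=bac$, associativity gives $c\cdot aba=(cab)a=ma$ and $(aba)c=a(bac)=an$, so $aba\in comm(c)$ is exactly $ma=an$, i.e. $caba=abac$; symmetrically $aca\in comm(b)$ reads $na=am$, i.e. $baca=acab$. I also record the formal $b\leftrightarrow c$ symmetry: interchanging $b$ and $c$ preserves the pair of hypotheses and exchanges $a^{\|(b,c)}$ with $a^{\|(c,b)}$, so it turns statement (2) ($a\,a^{\|(b,c)}=a^{\|(c,b)}a$) into statement (1) ($a\,a^{\|(c,b)}=a^{\|(b,c)}a$). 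Thus it suffices to prove (2) and read off (1); likewise each displayed formula need only be proved in one ``along $c$'' incarnation, its mirror being handled by symmetry.

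Second, I would check that the along-inverses in the statement exist. By Proposition \ref{propab}, $aba$ is invertible along $c$ iff it is $(c,c)$-invertible, which by the \cite{Chen} criterion used in Theorem \ref{polarity} means $c\in R\,c(aba)c$ and $c\in c(aba)c\,R$. Since $a$ is both $(b,c)$- and $(c,b)$-invertible, Lemma \ref{regular} supplies regularity of $b,c,cab,bac$ together with $b\in Rcab\cap bacR$ and $c\in cabR\cap Rbac$; combined with $c(aba)c=cabac$ these give the two inclusions, so $w:=(aba)^{\|c}=(caba+1-(aba)^{c\pi})^{-1}c$ is defined, with $w\in cRc$, $w(aba)c=c=c(aba)w$ and $w(aba)w=w$. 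The same argument (symmetrically, and with $aca$ in place of $aba$) produces $(aca)^{\|b}$ and $(aca)^{\|c}$.

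Third comes the heart: proving $a^{\|(b,c)}=ba\,w$ and $a^{\|(c,b)}=w\,ab$, which after rewriting $ba(caba+1-(aba)^{c\pi})^{-1}c=ba\,w$ and $(caba+1-(aba)^{c\pi})^{-1}cab=w\,ab$ are precisely the first equalities of (3) and (4). I would verify these through the defining conditions plus uniqueness of the $(b,c)$- and $(c,b)$-inverses. The memberships $ba\,w\in bR\cap Rc$ and $w\,ab\in cR\cap Rb$, and the identities $ca(ba\,w)=c(aba)w=c$ and $(w\,ab)ac=w(aba)c=c$, are immediate from the along-$c$ relations. Granting the two formulas, (2) drops out: $a\,a^{\|(b,c)}=a(ba\,w)=(aba)w=w(aba)=(w\,ab)a=a^{\|(c,b)}a$, the central equality being the lemma that $(aba)^{\|c}$ commutes with $aba$ once $aba$ commutes with $c$. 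That lemma I would prove by observing that $e:=w(aba)$ and $f:=(aba)w$ are the polar and dual polar idempotents of $aba$ along $c$, both lying in $comm(k)$ with $k:=caba=abac$ and with $k$ invertible in each corner $eRe$, $fRf$; the identities $ke=k=kf$ and $ek=k=fk$ give $k(e-f)=0=(e-f)k$, and multiplying by the corner inverse of $k$ yields $ef=e$ and $ef=f$, whence $e=f$. Statement (1) is then the $b\leftrightarrow c$ mirror of (2).

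The main obstacle is the one defining condition I have deferred, namely $(ba\,w)ab=b$ (equivalently $ba(w\,ab)=b$): this is exactly what upgrades $ba\,w$ and $w\,ab$ from mere outer inverses to the genuine $(b,c)$- and $(c,b)$-inverses, and it is here that both $caba=abac$ and $baca=acab$ are consumed. My intended route is to write $b=\gamma\,cab\in Rcab$ and $b\in bacR$ (from the two invertibilities), to expand $(ba\,w)ab=n\,w_{0}\,m$ with $w=c\,w_{0}\,c$, and to collapse it using $w(aba)c=c$, $c(aba)w=c$ together with $ma=an$ and $na=am$; the commuting lemma above and the auxiliary relation $w\,q=w$ for $q:=a\,a^{\|(b,c)}$ (which holds since $w\in Rc$ and $cq=c$) let the spectral idempotent of $a$ absorb correctly. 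Finally, the second equalities in (3) and (4), carrying $(aca)^{c\pi}$ and $(aca)^{b\pi}$, are obtained by repeating this verification with $aca$ along $c$ (for $(baca+1-(aca)^{c\pi})^{-1}bac$) and $aca$ along $b$ (for $ca(baca+1-(aca)^{b\pi})^{-1}b$), using $baca=acab$ in place of $caba=abac$, and again invoking uniqueness of the inverses.
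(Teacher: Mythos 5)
Your overall architecture is the same as the paper's: reduce everything to the inverse of $aba$ along $c$ (and of $aca$ along $b$), use the hypothesis $caba=abac$ to commute that inverse past $aba$, and then substitute the polar formula $(aba)^{\|c}=(caba+1-(aba)^{c\pi})^{-1}c$ from \cite[Theorem 2.8]{polarity-along}. Your commutation lemma is correct and is a nice self-contained replacement for the paper's citation of \cite[Theorem 10]{mary}: with $k=caba=abac$ and $w=(aba)^{\|c}$, the polar and dual polar idempotents $e=w(aba)$ and $f=(aba)w$ both lie in $kRk$ and satisfy $ek=ke=kf=fk=k$, so writing $e=k\alpha k$, $f=k\beta k$ gives $e=ef=f$. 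The $b\leftrightarrow c$ symmetry reducing (1) to (2) is also fine.

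The genuine gap is exactly the step you defer: that $(aba)^{\|c}$ exists under the stated hypotheses, and that $y=ba\,(aba)^{\|c}$ satisfies $yab=b$ (equivalently, that $(aba)^{\|c}ab$ satisfies the remaining defining condition of the $(c,b)$-inverse). The paper obtains existence together with all four identities $a^{\|(b,c)}=ba(aba)^{\|c}=(aca)^{\|b}ac$ and $a^{\|(c,b)}=(aba)^{\|c}ab=ca(aca)^{\|b}$ in one stroke by citing \cite[Theorem 1]{symetric}; everything else in its proof is bookkeeping. Your ``intended route'' for $ba\,w\,ab=b$ is only a plan, and your existence argument does not actually close either: Chen's criterion for $aba\in R^{\|c}$ requires $c\in cabacR$ and $c\in Rcabac$, whereas the obvious substitutions from $c\in cabR\cap Rbac$ combined with $acab=baca$ and $abac=caba$ yield memberships of the shape $c\in cabRac$ or $c\in cRbacaR$ --- converting these into the required one-sided principal ideals of $cabac$ is precisely where the content of \cite[Theorem 1]{symetric} lies. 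So as written the proposal establishes the easy parts of the theorem modulo its hard core. A minor further point: the ``$(aca)^{c\pi}$'' in item (3) of the statement is a typo for ``$(aca)^{b\pi}$'' (the second expression comes from $(aca)^{\|b}=(baca+1-(aca)^{b\pi})^{-1}b$ via $a^{\|(b,c)}=(aca)^{\|b}ac$); your plan to derive it from ``$aca$ along $c$'' chases that typo and would produce an expression of the form $(caca+1-(aca)^{c\pi})^{-1}c$, which is not what appears in the formula.
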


\begin{proof}
	
	To prove that, we should write at first the formula of $a^{\|(b,c)}$ and $a^{\|(c,b)}$.
	\smallskip
	
	We have $a \in R^{\|(b,c)}\cap R^{\|(c,b)}$. And by \cite[Theorem 1]{symetric} we get
	\begin{eqnarray}
		a^{\|(b,c)}=ba(aba)^{\|c}=(aca)^{\|b}ac ,
		\label{equa1} \end{eqnarray}
	and
	\begin{eqnarray}
		a^{\|(c,b)}=(aba)^{\|c}ab=ca(aca)^{\|b}.
		\label{equa2}
	\end{eqnarray}
	
	So we obtain
	
	$$
	\begin{aligned}
		a_{l}^{(b,c)\pi}&= a^{\|(b,c)}a&=(aca)^{\|b}aca \, ;\\
		a_{r}^{(b,c)\pi}&=aa^{\|(b,c)}&=aba(aba)^{\|c} \, ;\\
		a_{(b,c)\pi} ^{r}&=aa^{\|(c,b)}&=aca(aca)^{\|b} \, ; \\
		a_{(b,c)\pi} ^{l}&= a^{\|(c,b)}a&=(aba)^{\|c}aba.
	\end{aligned}
	$$
	
	As $aba\in comm(c)$ and $aca\in comm(b)$, then
	$(aba)^{\|c}aba=aba(aba)^{\|c}$ and $aca(aca)^{\|b}=(aca)^{\|b}aca$ by \cite[Theorem 10]{mary} and  it follows that 
	$$	a_{l}^{(b,c)\pi}= a_{(b,c)\pi} ^{r} \quad  \text{ and} \quad  a_{r}^{(b,c)\pi} = a_{(b,c)\pi} ^{l}.$$
	Using  \cite[Theorem 2.8]{polarity-along}, we obtain that
	$$
	(aba)^{\|c}=(caba+1-(aba)^{c\pi})^{-1}c.$$
	$$(aca)^{\|b}=	(baca+1-(aca)^{b\pi})^{-1}b.	
	$$	
	Substitute in  (\ref{equa1}) and (\ref{equa2}), we obtain the result of $(3)$ and $(4)$.
\end{proof}
	
	\begin{remark}
		
\begin{enumerate}
	\item 
 We can also write $a^{\|(b,c)}$ and $a^{\|(c,b)}$ by using the result of \cite[Proposition 5]{symetric} as follow:
	$$a^{\|(b,c)}=bac(abac)^{\sharp}=ba(caba)^{\sharp}c=b(acab)^{\sharp}ac=(baca)^{\sharp}bac,$$
	$$ a^{\|(c,b)}=cab(acab)^{\sharp}=ca(baca)^{\sharp}b=c(abac)^{\sharp}ab=(caba)^{\sharp}cab.$$
	
	\item  If $a$ is only (c,b) polar, this does not allow us to have the equalities in the previous  theorem, because we may not have the right and the left $(b,c)$-spectral idempotents of $a$ since $a$ may not be $(b,c)$-polar, as we showed in Example  \ref{exmple3.5}.
\end{enumerate}	
\end{remark}

An {\it involution} * is a bijection $x\mapsto x^{*}$ on $R$, which satisfies the following conditions  for all $a, b \in R$: \smallskip

 $ i)\; \;  (a^{*})^{*}=a$ ; \smallskip
 
  $ ii) \; \; (ab)^{*}=b^{*}a^{*}$ ; \smallskip
  
    $ iii) \; \;  (a+b)^{*}=a^{*}+b^{*}$.\smallskip 
    
     We say that $R$ is a *-ring if there is an involution on $R$.
	
	\begin{proposition}
		Let $R$ be a *-ring,  and let $ a,b ,c \in R$. Then  $a$ is $(b,c)$-polar if and only if $a^{*}$ is dually $(b^{*},c^{*})$-polar.		

	In this case  we have
	$$
	\begin{aligned}
	   	(a_{l}^{(b,c)\pi})^{*}&=(a^{*})_{(b^{*},c^{*})\pi} ^{r}.\\
		( a_{r}^{(b,c)\pi} )^{*}& =(a^{*})_{(b^{*},c^{*})\pi} ^{l}.	
	\end{aligned}
	$$
	\end{proposition}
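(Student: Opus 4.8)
The plan is to apply the involution $*$ directly to the four defining conditions of $(b,c)$-polarity and verify that they turn, term by term, into the four defining conditions of dual $(b^*,c^*)$-polarity, with the witnessing idempotents transported by $*$. Concretely, I would assume $a$ is $(b,c)$-polar with left spectral idempotent $p=a_{l}^{(b,c)\pi}$ and right spectral idempotent $q=a_{r}^{(b,c)\pi}$, set $P=p^{*}$ and $Q=q^{*}$, and show that $P$ is the dual right $(b^{*},c^{*})$-spectral idempotent of $a^{*}$ while $Q$ is the dual left one. Writing $A=a^{*}$, $B=b^{*}$, $C=c^{*}$ keeps the bookkeeping transparent.

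First I would record the two facts that make everything work: $*$ is an involutive anti-automorphism, so $(xy)^{*}=y^{*}x^{*}$ and $(x^{*})^{*}=x$, and $*$ restricts to a bijection of $R$, whence $\{t^{*}:t\in R\}=R$. The latter converts each membership statement into an equivalent membership statement. For condition (1), $p\in bRca$ reads $p=btca$ for some $t$, so $p^{*}=a^{*}c^{*}t^{*}b^{*}\in a^{*}c^{*}Rb^{*}$, which is exactly the requirement $P^{2}=P\in ACRB$ for a dual right spectral idempotent. For condition (2), $q\in abRc$ gives $q^{*}\in c^{*}Rb^{*}a^{*}=CRBA$, the requirement for a dual left spectral idempotent. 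The single delicate point is the order of the factors: because $*$ reverses products, $bRca$ must be sent to $a^{*}c^{*}Rb^{*}$ and $abRc$ to $c^{*}Rb^{*}a^{*}$, and one must check these are precisely the ideal shapes $ACRB$ and $CRBA$ appearing in the dual definition. Conditions (3) and (4) are plain equations, and starring each one, $pb=b\Rightarrow b^{*}P=b^{*}$, $cq=c\Rightarrow Qc^{*}=c^{*}$, $cap=ca\Rightarrow Pa^{*}c^{*}=a^{*}c^{*}$, and $qab=ab\Rightarrow b^{*}a^{*}Q=b^{*}a^{*}$, reproduces exactly $Br=B$, $sC=C$, $rAC=AC$, $BAs=BA$ for $a^{*}$ with $r=P$ and $s=Q$. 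This proves $a^{*}$ is dually $(b^{*},c^{*})$-polar.

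Since $*$ is a bijection of $R$, every one of these transformations is an equivalence rather than a one-way implication: for equations this is the injectivity of $*$, and for the membership conditions it is the surjectivity of $t\mapsto t^{*}$. Consequently the existence of a pair $(p,q)$ witnessing $(b,c)$-polarity of $a$ is equivalent to the existence of the pair $(p^{*},q^{*})$ witnessing dual $(b^{*},c^{*})$-polarity of $a^{*}$, and the biconditional follows at once without a separate reverse argument (if one prefers, the converse is obtained by running the identical computation on the dual conditions and using $(a^{*})^{*}=a$, $(b^{*})^{*}=b$, $(c^{*})^{*}=c$). The displayed identities then come from uniqueness: by the uniqueness result for dual spectral idempotents established earlier, $a^{*}$ has a unique dual right and a unique dual left $(b^{*},c^{*})$-spectral idempotent, so the idempotents $P=p^{*}$ and $Q=q^{*}$ constructed above must coincide with $(a^{*})_{(b^{*},c^{*})\pi}^{r}$ and $(a^{*})_{(b^{*},c^{*})\pi}^{l}$, yielding $(a_{l}^{(b,c)\pi})^{*}=(a^{*})_{(b^{*},c^{*})\pi}^{r}$ and $(a_{r}^{(b,c)\pi})^{*}=(a^{*})_{(b^{*},c^{*})\pi}^{l}$. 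I anticipate no real obstacle here; the only place demanding care is the factor-order bookkeeping under the anti-automorphism, together with the observation that $*$ interchanges the left and right roles (a left spectral idempotent of $a$ becomes a dual right spectral idempotent of $a^{*}$), which is precisely what the claimed formulas encode.
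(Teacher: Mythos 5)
Your proof is correct, and it takes a genuinely different route from the paper's. The paper does not work with the idempotents at all in the forward step: it invokes Theorem \ref{polarity} to replace ``$a$ is $(b,c)$-polar'' by ``$a$ is $(b,c)$-invertible'', applies the involution to the three defining conditions of the $(b,c)$-inverse $y$ to conclude that $a^{*}$ is $(c^{*},b^{*})$-invertible with inverse $y^{*}$, and then cites the equivalence between $(c^{*},b^{*})$-invertibility of $a^{*}$ and dual $(b^{*},c^{*})$-polarity; the displayed idempotent identities then fall out of the formulas $a_{l}^{(b,c)\pi}=a^{\|(b,c)}a$, $a_{r}^{(b,c)\pi}=aa^{\|(b,c)}$ and their dual counterparts. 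You instead transport the spectral idempotents themselves, starring the four conditions of Definition \ref{def1} term by term and matching them against the four conditions of dual polarity, with the factor-order reversal $bRca\mapsto a^{*}c^{*}Rb^{*}$ and $abRc\mapsto c^{*}Rb^{*}a^{*}$ handled correctly and the left/right role swap made explicit. Your argument is more self-contained --- it needs only the uniqueness of the dual spectral idempotents (to pin down $p^{*}$ and $q^{*}$ as \emph{the} dual idempotents), not the two polarity--invertibility equivalence theorems --- and the biconditional is genuinely free since $*$ is an involutive bijection. What the paper's route buys in exchange is the additional identity $(a^{\|(b,c)})^{*}=a^{*\|(c^{*},b^{*})}$ for the inverses themselves, which your argument does not produce.
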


	\begin{proof}
	
	We have $a$ is $(b,c)$-polar if and only if $a$ is $(b,c)$-invertible by theorem \ref{polarity}. Suppose that $y=a^{\|(b,c)}$ so $y\in bR\cap Rc$ ;  $yab=b$ and $cay=c $.
	By involution we get $y^{*}\in c^{*}R\cap Rb^{*} $ ; $b^{*}a^{*}y^{*}=b^{*}$ and $y^{*}a^{*}c^{*}=c^{*}$ which means that $a^{*}$ is $(c^{*},b^{*})$-invertible with inverse $y^{*}$ i.e ; $y^{*}=(a^{\|(b,c)})^{*}=a^{*\|(c^{*},b^{*})}$.
	And $a^{*}$ is $(c^{*},b^{*})$-invertible  equivalent to $a^{*}$ is dually $(b^{*},c^{*})$-polar. And we have
	$$
	\begin{aligned}
		(a_{l}^{(b,c)\pi})^{*}&=(a^{\|(b,c)}a)^{*}=a^{*}(a^{\|(b,c)})^{*}=a^{*}a^{*\|(c^{*},b^{*})}=   (a^{*})_{(b^{*},c^{*})\pi} ^{r},
		\\
		( a_{r}^{(b,c)\pi} )^{*}&=	(aa^{\|(b,c)})^{*}=(a^{\|(b,c)})^{*}a^{*}=a^{*\|(c^{*},b^{*})}a^{*}=(a^{*})_{(b^{*},c^{*})\pi} ^{l}.				
	\end{aligned}
$$	
\end{proof}

\begin{proposition}
	Let $a,b,c \in R$ such that $a$ is $(b,c)$-polar and let $k\geq 1$. If $a\in comm(b)\cap comm(c)$ and $ba=ca$,  then we have \begin{enumerate}
		\item $a$ is polar along $b$ and $a^{b\pi}=a_{l}^{(b,c)\pi}$.
		\item $a$ is dually polar along $c$ and $a_{c\pi}=a_{r}^{(b,c)\pi}$.
		\item $a^{k}$ is $(b^{k},c^{k})$-polar, with $ (a^{k})_{l}^{(b^{k},c^{k})\pi}=a_{l}^{(b,c)\pi} $ and $(a^{k})_{r}^{(b^{k},c^{k})\pi}=a_{r}^{(b,c)\pi}$.
		
	\end{enumerate}

\end{proposition}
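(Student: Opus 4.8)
The plan is to reduce each assertion to the characterizations already proved, after first recording what the commutation hypotheses buy us. Since $a\in comm(b)\cap comm(c)$ and $ba=ca$, all four products coincide, $ab=ba=ca=ac$, whence $cab=bab$, $cab=cac$, and $b^{k}a^{k}=a^{k}b^{k}=(ab)^{k}=(ca)^{k}=c^{k}a^{k}=a^{k}c^{k}$ for every $k\ge 1$. By Theorem \ref{polarity}, $a$ is $(b,c)$-invertible; writing $y:=a^{\|(b,c)}$ we have $p=a_{l}^{(b,c)\pi}=ya$ and $q=a_{r}^{(b,c)\pi}=ay$. Using $ac=ab$ and $yab=b$ gives the auxiliary identity $pc=y(ac)=y(ab)=b$, and dually $bq=b(ay)=(ba)y=(ca)y=cay=c$.

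For (1) I will check that $p$ satisfies the definition of the spectral idempotent of $a$ along $b$: $p=p^{2}\in comm(ba)$, $pb=b$ and $p\in baRba$. Idempotency and $pb=b$ are given, and $p\in comm(ba)$ is immediate, since $pba=ba$ (right-multiply $pb=b$ by $a$) and $bap=(ca)p=cap=ca=ba$. The one substantive point is $p\in baRba$, which I obtain from $b\in baR$: as $c\in cabR$, say $c=cab\,s$, the relation $pc=b$ yields
$$ b=pc=p\,cab\,s=(pc)(ab)\,s=b\,ab\,s=ba(bs)\in baR. $$
Writing $p=b\,t\,ba\in bRba$ (from $p\in bRca$ and $ca=ba$) and replacing the leading $b$ by $ba\,u$ puts $p$ in $baRba$. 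Then \cite[Theorem 2.4]{polarity-along} gives that $a$ is polar along $b$, and uniqueness of the spectral idempotent forces $a^{b\pi}=p=a_{l}^{(b,c)\pi}$. Statement (2) is the exact mirror: $q=q^{2}\in comm(ac)$, $cq=c$ are free, $acq=ac$ and $qac=q(ab)=qab=ab=ac$ give $q\in comm(ac)$, and from $b\in Rcab$ together with $bq=c$ one gets $c\in Rcac\subseteq Rac$, which upgrades $q\in abRc=acRc$ to $q\in acRac$; hence $a$ is dually polar along $c$ with $a_{c\pi}=q=a_{r}^{(b,c)\pi}$.

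For (3) the equalities transfer at once: $pb^{k}=b^{k}$, $c^{k}q=c^{k}$ follow from $pb=b$, $cq=c$, while $c^{k}a^{k}p=(ca)^{k}p=(ca)^{k}=c^{k}a^{k}$ and $qa^{k}b^{k}=a^{k}b^{k}$ follow from $(ca)p=ca$, $q(ab)=ab$. The two memberships I obtain by bootstrapping the base facts of (1) and (2): $b\in baR$ gives, by an easy induction, $b\in(ba)^{k}R=b^{k}a^{k}R$, and $c\in Rac$ gives $c\in R(ac)^{k}=Ra^{k}c^{k}$. Writing $y=bm=nc$, the first yields $p=ya=bma\in b^{k}R$, and the second yields $p=nca\in Rc^{k}a^{k}$ (using $c^{k}a=ac^{k}$ and $a^{k}c^{k}=c^{k}a^{k}$); multiplying these factorizations through $p=p^{2}$ gives $p\in b^{k}Rc^{k}a^{k}$, and the dual computation gives $q=q^{2}\in a^{k}b^{k}Rc^{k}$. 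Thus $a^{k}$ is $(b^{k},c^{k})$-polar, and Theorem \ref{thm.1} identifies its left and right spectral idempotents as $p$ and $q$.

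I expect the real obstacle to be precisely these ideal-membership conditions — promoting the given $p\in bRca$ first to $p\in baRba$ (part 1) and then to $p\in b^{k}Rc^{k}a^{k}$ (part 3) — since the hypotheses only place $p$ in $bRca$, so the extra powers and the extra left factor $a$ must be manufactured from $b\in baR$ and $c\in Rac$ together with the idempotency $p=p^{2}$; everything else is bookkeeping with the commutation chain.
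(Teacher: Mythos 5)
Your parts (1) and (2) are correct and take a genuinely different route from the paper's. For the same three defining conditions of (dual) polarity along an element, the paper verifies the invertibility form: it exhibits explicit one-sided inverses of $ba+1-p$ and $ac+1-q$ and then uses Jacobson's lemma to pass from one-sided to two-sided invertibility; you instead verify the ideal-membership form ($p\in baRba$, $q\in acRac$) directly, via the identities $pc=b$ and $bq=c$ and the memberships $b\in baR$, $c\in Rac$. Both are legitimate, since the introduction records the two forms of polarity along an element as equivalent; yours avoids Jacobson's lemma, the paper's avoids the factorization bookkeeping.

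Part (3) has a genuine gap at exactly the step you yourself flagged as the real obstacle. The inference ``$b\in baR$ gives, by an easy induction, $b\in(ba)^{k}R$'' is not valid: substituting $b=bau$ into itself only yields $b=ba(uau)$, never a second factor of $ba$, because the right-hand side contains a single occurrence of $b$; and in general an element of $(ba)R$ need not lie in $(ba)^{2}R$ (in $\mathbb{Z}/4\mathbb{Z}$ take $a=1$, $b=2$: then $b\in baR$ while $(ba)^{2}R=\{0\}$), so the extra hypotheses must enter the argument and a bare induction on $b\in baR$ cannot work. The powers have to be manufactured from the idempotent, not from $b$: since $p=p^{2}$, $p\in comm(ba)$ and $p\in(ba)R$, writing $p=(ba)^{j}w$ gives $p=p\cdot p=(ba)^{j}pw=(ba)^{2j}w^{2}$, hence $p\in(ba)^{k}R$ for every $k$, and symmetrically $p\in R(ba)^{k}$, whence $p=p^{2}\in(ba)^{k}R(ba)^{k}\subseteq b^{k}Rc^{k}a^{k}$; only then does $b=pb$ give $b\in(ba)^{k}R$. (The paper secures the same membership differently, from $(ba+1-p)^{k}=(ba)^{k}+1-p\in R^{-1}$ together with the equivalence quoted from \cite[Theorem 2.4]{polarity-along}.) The dual claim $c\in R(ac)^{k}$ needs the same repair. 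With those memberships in place, the remaining verifications in your part (3) and the appeal to Theorem \ref{thm.1} for identifying the spectral idempotents are fine.
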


\begin{proof}

Since $a$ is $(b,c)$-polar then there exist $p=a_{l}^{(b,c)\pi}$ and $q=a_{r}^{(b,c)\pi}$ such that
 $$p=p^{2}\in bRca \; ;  \; q=q^{2}\in abRc \;  ;  \; pb=b \; ; \; cq=c \; ; \; cap=ca \;  \text{  and } \; qab=ab.$$

    $(1)$ and $(2)$:  As $ba=ca=ac=ab$ then  $bap=ba=pba$ which means that $p\in comm(ba)$ and $acq=ac=qac$ which means that $q\in comm(ac)$. Also $p\in bRba \subseteq Rba $ and $
	q\in abRc=acRc\subseteq acR$. Then $p=tba$ for some $t\in R$ and $q=acx$ for some $x\in R$. Then
	$$\begin{array}{ccl}
	(ptp+1-p)(ba+1-p)&=&ptpba+ptp(1-p)+(1-p)ba+1-p\\
	&=&ptba+0+0+1-p=p^{2}+1-p\\
	&=&1.
\end{array}$$
	
	Hence $(ba+1-p)\in R_{l}^{-1}$.  \smallskip
	And
$$\begin{array}{ccl}
			 (ac+1-q)(qxq+1-q)&=&acqxq+ac(1-q)+(1-q)qxq+1-q\\ &=&acxq+0+0+1-q\\
			 &=&q^{2}+1-q\\
			 &=&1.
\end{array}$$
	Hence $ac+1-q \in R_{r}^{-1}$.  
\smallskip

	By Jacobson's lemma  and the expression of $p=a^{\|(b,c)}a$ and $q=aa^{\|(b,c)}$, we have $$ 1+ba-p \in R_{l}^{-1} \Longleftrightarrow ac+1-q \in R_{l}^{-1} , $$
	and $$ 1+ ac-q \in R_{r}^{-1} \Longleftrightarrow ba+1-p \in R_{r}^{-1}.$$
	Thus we obtain $ba+1-p \in R^{-1}$ and $ac+1-q \in R^{-1}$. Consequently $a$ is polar along $b$ with $a^{b\pi}=p=a_{l}^{(b,c)\pi}$ and $a$ is dually polar along $c$ with $a_{c\pi}=q=a_{r}^{(b,c)\pi}$. \hfill
	
	$(3)$: Since $p$ and $q$ are idempotents and $a\in comm(b,c)$ we have $$c^{k}a^{k}p=(ca)^{k}p^{k}=(ca)^{k}=c^{k}a^{k},$$ and
	$$qa^{k}b^{k}=q^{k}(ab)^{k}=(ab)^{k}=a^{k}b^{k}.$$
	In other side, we have $pb^{k}=pbb^{k-1}=bb^{k-1}=b^{k}$ and $c^{k}q=c^{k-1}cq=c^{k-1}c=c^{k}$.
	
	Using $(1)$ and $(2)$ we have $$ \begin{aligned}	
	 ba+1-p \in R^{-1} & \Longrightarrow (ba+1-p)^{k}\in R^{-1} \\
	 &\Longleftrightarrow (ba)^{k}+1-p \in R^{-1} \Longleftrightarrow p\in (ba)^{k}R(ba)^{k} \; \text{ by }  \; \mbox{\cite[Theorem 2.4]{polarity-along}} \\
	 &\Longleftrightarrow p\in b^{k}a^{k}R(ca)^{k} \subseteq b^{k}Rc^{k}a^{k}.	
\end{aligned} $$	
	
Similarly $$      \begin{aligned}
 ac+1-q\in R^{-1} &\Longrightarrow (1+ac-q)^{k} \in R^{-1} \\
&\Longleftrightarrow (ac)^{k}+1-q \in R^{-1} \Longleftrightarrow  q\in (ac)^{k}R(ac)^{k}\\
&\Longleftrightarrow q\in (ab)^{k}Ra^{k}c^{k} \subseteq a^{k}b^{k}Rc^{k}
\end{aligned}.$$
Finally $a^{k}$ is $(b^{k},c^{k})$ polar with $(a^{k})_{l}^{(b^{k},c^{k})\pi}=p=a_{l}^{(b,c)\pi}$ and $(a^{k})_{r}^{(b^{k},c^{k})\pi}=q=a_{r}^{(b,c)\pi}$.

\end{proof}

\begin{theorem}
	Let $a, b, c$ and $ d \in R$ such that $a$ is $(b,c)$-polar. Then the following conditions are equivalent : \\
	\begin{enumerate}
		\item $d$ is $(b,c)$-polar such that $a_{l}^{(b,c)\pi}=d_{l}^{(b,c)\pi}$ and $a_{r}^{(b,c)\pi}=d_{r}^{(b,c)\pi}$.
		\\
	\item 	
$\left\{
\begin{array}{ll}
	cda_{l}^{(b,c)\pi}=cd \: ; \: a_{l}^{(b,c)\pi} \in bRcd  \mbox{  and  } a_{l}^{(b,c)\pi}b=b \\
	a_{r}^{(b,c)\pi}db=db  \: ; \:  a_{r}^{(b,c)\pi}\in dbRc \mbox{  and  }
	ca_{r}^{(b,c)\pi}=c
\end{array}
\right.$		
\item 	

$\left\{
\begin{array}{ll}
	cda_{l}^{(b,c)\pi}=cd \: ; \: a_{l}^{(b,c)\pi} \in bRcd\cap bRca  \mbox{  and  } a_{l}^{(b,c)\pi}b=b \\
	a_{r}^{(b,c)\pi}db=db  \: ; \:  a_{r}^{(b,c)\pi}\in abRc \cap dbRc \mbox{  and  }
	ca_{r}^{(b,c)\pi}=c
\end{array}
\right.$ 
		\item  $d$ is $(b,c)$-polar, $cda_{l}^{(b,c)\pi}=cd$ and $a_{r}^{(b,c)\pi}db=db$.		
	\end{enumerate}	
\end{theorem}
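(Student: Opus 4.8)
The plan is to prove the cyclic chain of implications $(1)\Rightarrow(2)\Rightarrow(3)\Rightarrow(4)\Rightarrow(1)$. Throughout I abbreviate $p=a_{l}^{(b,c)\pi}$ and $q=a_{r}^{(b,c)\pi}$; since $a$ is assumed $(b,c)$-polar I will use freely that $p^{2}=p\in bRca$, $q^{2}=q\in abRc$, $pb=b$, $cq=c$, $cap=ca$ and $qab=ab$. The guiding idea is that the four conditions are successively stripped-down restatements of the single assertion that $d$ is again $(b,c)$-polar and shares its left and right spectral idempotents with $a$.

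The three easy links I would dispatch quickly. For $(1)\Rightarrow(2)$ I would simply read off Definition \ref{def1} applied to $d$ with $p=d_{l}^{(b,c)\pi}$ and $q=d_{r}^{(b,c)\pi}$: this yields $p\in bRcd$, $cdp=cd$, $pb=b$ and, dually, $q\in dbRc$, $qdb=db$, $cq=c$, which is exactly $(2)$. For $(2)\Rightarrow(3)$ nothing is needed beyond observing that the only extra requirements in $(3)$ are the memberships $p\in bRca$ and $q\in abRc$, both of which hold automatically because $p$ and $q$ are the spectral idempotents of the $(b,c)$-polar element $a$. For $(3)\Rightarrow(4)$ I would note that the relations listed in $(3)$, together with the standing facts $p^{2}=p$ and $q^{2}=q$, are precisely the four clauses of Definition \ref{def1} for $d$ with candidate idempotents $p,q$; hence $d$ is $(b,c)$-polar, and the two linking equations $cdp=cd$, $qdb=db$ already appear in $(3)$, giving $(4)$.

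The substantive step is $(4)\Rightarrow(1)$. Here $d$ is $(b,c)$-polar with its own spectral idempotents $p'=d_{l}^{(b,c)\pi}$ and $q'=d_{r}^{(b,c)\pi}$, and I am given the cross conditions $cdp=cd$ and $qdb=db$; I must show $p=p'$ and $q=q'$. Writing $p=btca\in bRca$ and $p'=bscd\in bRcd$ for suitable $t,s\in R$, the linking equation $cdp=cd$ gives
$$p'p=(bscd)p=bs(cdp)=bs\,cd=p',$$
while $p'b=b$ (valid since $p'$ is a left $(b,c)$-spectral idempotent of $d$) gives
$$p'p=p'(btca)=(p'b)tca=btca=p,$$
so that $p=p'$. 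Symmetrically, writing $q=abuc\in abRc$ and $q'=dbvc\in dbRc$, the equation $qdb=db$ yields $qq'=(qdb)vc=db\,vc=q'$, while $cq'=c$ yields $qq'=abu(cq')=abuc=q$, whence $q=q'$. Thus $p=d_{l}^{(b,c)\pi}$ and $q=d_{r}^{(b,c)\pi}$, which is $(1)$.

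I expect this last implication to be the main obstacle: in $(4)$ the element $d$ carries a priori its own pair of spectral idempotents, and the work lies in showing that the two cross conditions force them to coincide with those of $a$. The mechanism is the same telescoping computation used in the uniqueness argument of Theorem \ref{thm.1}, but now applied across the two distinct elements $a$ and $d$: one representation ($p\in bRca$ on one side, $p'\in bRcd$ on the other) is plugged into each evaluation of the product $p'p$, and the membership-plus-linking relations collapse the two computations to $p$ and $p'$ respectively. Everything else in the proof is routine bookkeeping with the defining identities of $(b,c)$-polarity.
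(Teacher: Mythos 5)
Your proposal is correct and takes essentially the same approach as the paper: the easy implications amount to reading off Definition \ref{def1} (together with the uniqueness of the spectral idempotents from Theorem \ref{thm.1}), and your substantive step $(4)\Rightarrow(1)$ is exactly the paper's telescoping computation, evaluating the product $d_{l}^{(b,c)\pi}a_{l}^{(b,c)\pi}$ two ways via the representations in $bRcd$ and $bRca$ and the cross conditions. The only difference is organizational --- you prove a single cycle $(1)\Rightarrow(2)\Rightarrow(3)\Rightarrow(4)\Rightarrow(1)$ where the paper proves three separate bi-implications --- which changes nothing of substance.
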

\begin{proof}

$(i) \Rightarrow (ii)$: is clear. \smallskip
\\
$(ii) \Rightarrow (i)$: by Definition \ref{def1} and Theorem \ref{thm.1} \smallskip
\\
$(ii) \Rightarrow (iii)$: we  have $ a_{l}^{(b,c)\pi} \in bRca$ and $a_{r}^{(b,c)\pi}\in abRc$, also we have by hypothesis $ a_{l}^{(b,c)\pi}\in bRcd$ and $a_{r}^{(b,c)\pi}\in dbRc$, so $a_{l}^{(b,c)\pi}\in bRcd \cap bRca$ and $a_{r}^{(b,c)\pi} \in abRc \cap dbRc$. \smallskip
\\
$(iii) \Rightarrow (ii)$:  it is  obvious \smallskip
\\
$(i) \Rightarrow (iv)$: is clear. \smallskip
\\
$(iv) \Rightarrow (i)$: we prove that $a_{l}^{(b,c)\pi}=d_{l}^{(b,c)\pi}$ and $a_{r}^{(b,c)\pi}=d_{r}^{(b,c)\pi}$. At first we have $d_{l}^{(b,c)\pi} \in bRcd$ so $d_{l}^{(b,c)\pi}=bxcd$ for some $x \in R$; and $a_{l}^{(b,c)\pi} \in bRca$ so $a_{l}^{(b,c)\pi}=btca$ for some $t \in R$. Moreover  $cdd_{l}^{(b,c)\pi}=cd=cda_{l}^{(b,c)\pi}$ and $a_{l}^{(b,c)\pi}b=b=d_{l}^{(b,c)\pi}b$, thus
 $$
d_{l}^{(b,c)\pi}-d_{l}^{(b,c)\pi}a_{l}^{(b,c)\pi}=bxcd-bxcda_{l}^{(b,c)\pi}=bxcd-bxcd=0.$$
 Hence
  $$d_{l}^{(b,c)\pi}=d_{l}^{(b,c)\pi}a_{l}^{(b,c)\pi}.$$
And $$a_{l}^{(b,c)\pi}-d_{l}^{(b,c)\pi}a_{l}^{(b,c)\pi}=btca-d_{l}^{(b,c)\pi}btca=btca-btca=0.$$
So $$a_{l}^{(b,c)\pi}=d_{l}^{(b,c)\pi}a_{l}^{(b,c)\pi}.$$
Consequently, we get $a_{l}^{(b,c)\pi}=d_{l}^{(b,c)\pi}$.
\smallskip

 Similarly  we show that  $d_{r}^{(b,c)\pi}=a_{r}^{(b,c)\pi}$.

\end{proof}

\section{The $(B,C)$-polarity for bounded linear operators}

Let $X$ be a complex Banach space and $\mathcal{B}(X)$ denotes the algebra of all bounded linear operators. Let $A\in \mathcal{B}(X)$, we will denote by $\mathcal{N}(A)=\{x\in X \;  :  \; Ax=0 \; \}$ the nullspace of $A$ and by $\mathcal{R}(A)=\{ Ax \; :  \; x\in X \; \}$ the range of $A$ and we write $I\in \mathcal{B}(X)$ for the identity operator.  \par

Let $A$, $B$, $C$ $\in \mathcal{B}(X)$, then $A$ is $(B,C)$-polar if there exist two projections  $P$ and $Q$ $\in \mathcal{B}(X)$ such that 

\begin{enumerate}
	\item $P\in B\mathcal{B}(X) CA$;
	\item  $Q\in  AB\mathcal{B}(X)C$; 
	\item $PB=B$, $CQ=C$;
	\item  $CAP=CA$, $QAB=AB$.	
\end{enumerate}

A closed subspace $M$ of $X$ is a complemented subspace of $X$ if there exits a closed subspace $N$ of $X$ such that $X=M\oplus N$.
\smallskip

Recall that an  operator $A\in \mathcal{B}(X)$ is regular if and only if $\mathcal{R}(A)$ is closed and a complemented subspace of $X$ and $\mathcal{N}(A)$ is a complemented subspace of $X$ (see \cite[Proposition 13.1]{Muller}).

\begin{theorem}\label{lagh1}
	Let $A$, $B$, $C$ $\in  \mathcal{B}(X)$ such that $B$, $C$ and $CAB$ are regular. Then the following assertions are equivalent :
	\begin{enumerate}
		\item 		
		$A$ is $(B,C)$-invertible ;
		\item 
		 $A$ is $(B,C)$-polar ;
		\item 
		There exist projections $P$, $Q$ $ \in  \mathcal{B}(X)$ such that
		
		 	\indent $ i) \;  \mathcal{R}(P)=\mathcal{R}(B)$ ; \smallskip
		 	
		 		\indent	$  ii) \; \mathcal{N}(Q)= \mathcal{N}(C)$ ; \smallskip
		 		
		 	\indent $ iii) \; \mathcal{R}(Q)=\mathcal{R}(AB)$ ; \smallskip
		 	
			\indent   $ iv)  \; \mathcal{N}(P)= \mathcal{N}(CA)$. 	
	\end{enumerate}	
\end{theorem}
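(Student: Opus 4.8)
The plan is to establish the cycle $(1)\Leftrightarrow(2)\Rightarrow(3)\Rightarrow(1)$. The equivalence $(1)\Leftrightarrow(2)$ is free: it is exactly Theorem \ref{polarity} applied in the ring $R=\mathcal{B}(X)$, and since that argument is purely algebraic no topology is involved. The genuine content is the translation of the algebraic membership and absorption conditions defining $(B,C)$-polarity into the four geometric conditions on ranges and null spaces in $(3)$, and back. Throughout I would use that a projection $P\in\mathcal{B}(X)$ gives a topological direct sum $X=\mathcal{R}(P)\oplus\mathcal{N}(P)$ with $I-P$ the complementary projection, so that $\mathcal{R}(I-P)=\mathcal{N}(P)$ and $\mathcal{N}(I-P)=\mathcal{R}(P)$.

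For $(2)\Rightarrow(3)$ I would treat the four conditions one at a time, each being the conjunction of two opposite inclusions. Writing $P=BTCA$ (from $P\in B\mathcal{B}(X)CA$) yields $\mathcal{R}(P)\subseteq\mathcal{R}(B)$ and $\mathcal{N}(CA)\subseteq\mathcal{N}(P)$ simultaneously, while the absorption identity $PB=B$ shows $P$ fixes $\mathcal{R}(B)$, hence $\mathcal{R}(B)\subseteq\mathcal{R}(P)$, and $CAP=CA$ rewrites as $CA(I-P)=0$, hence $\mathcal{N}(P)=\mathcal{R}(I-P)\subseteq\mathcal{N}(CA)$. Together these give $\mathcal{R}(P)=\mathcal{R}(B)$ and $\mathcal{N}(P)=\mathcal{N}(CA)$. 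The symmetric computation with $Q=ABSC$, $QAB=AB$ and $CQ=C$ yields $\mathcal{R}(Q)=\mathcal{R}(AB)$ and $\mathcal{N}(Q)=\mathcal{N}(C)$.

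For $(3)\Rightarrow(1)$ the key observation is that the existence of the projections $P,Q$ is equivalent to the topological direct sums $X=\mathcal{R}(B)\oplus\mathcal{N}(CA)$ and $X=\mathcal{R}(AB)\oplus\mathcal{N}(C)$. From the first, if $CABx=0$ then $Bx\in\mathcal{R}(B)\cap\mathcal{N}(CA)=\{0\}$, so $\mathcal{N}(CAB)\subseteq\mathcal{N}(B)$; from the second, decomposing an arbitrary $y=ABz+w$ with $w\in\mathcal{N}(C)$ gives $Cy=CABz$, so $\mathcal{R}(C)\subseteq\mathcal{R}(CAB)$. Now I would invoke the regularity of $CAB$: it has a bounded inner inverse $(CAB)^{-}$, and since $I-(CAB)^{-}CAB$ projects onto $\mathcal{N}(CAB)\subseteq\mathcal{N}(B)$ while $CAB(CAB)^{-}$ projects onto $\mathcal{R}(CAB)\supseteq\mathcal{R}(C)$, one obtains the explicit factorizations $B=B(CAB)^{-}CAB$ and $C=CAB(CAB)^{-}C$. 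Thus $B\in\mathcal{B}(X)\,CAB$ and $C\in CAB\,\mathcal{B}(X)$, and \cite[Lemma 1]{Chen}, whose hypotheses hold since $B$, $C$, $CAB$ are assumed regular, gives that $A$ is $(B,C)$-invertible.

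The step I expect to be the main obstacle is $(3)\Rightarrow(1)$, specifically ensuring that the two one-sided factorizations can be realized by \emph{bounded} operators: the geometric inclusions $\mathcal{N}(CAB)\subseteq\mathcal{N}(B)$ and $\mathcal{R}(C)\subseteq\mathcal{R}(CAB)$ are what make the factorization possible, but producing bounded factors requires the closedness and complementedness encoded in the regularity of $CAB$ (equivalently, a bounded generalized inverse $(CAB)^{-}$, via \cite[Proposition 13.1]{Muller}). This is precisely where the standing regularity hypotheses on $B$, $C$ and $CAB$ enter; by contrast, the algebraic equivalence $(1)\Leftrightarrow(2)$ and the implication $(2)\Rightarrow(3)$ require no such assumption.
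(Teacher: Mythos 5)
Your proposal is correct, and its first two-thirds (the equivalence $(1)\Leftrightarrow(2)$ via Theorem \ref{polarity}, and the implication $(2)\Rightarrow(3)$ by pairing each membership condition $P\in B\mathcal{B}(X)CA$, $Q\in AB\mathcal{B}(X)C$ with the corresponding absorption identity to get the two opposite inclusions) matches the paper's proof essentially line for line. Where you diverge is in $(3)\Rightarrow(1)$. The paper first recovers $PB=B$ and $CQ=C$ from the range/kernel conditions, then proves the two equalities $\mathcal{N}(B)=\mathcal{N}(CAB)$ and $\mathcal{R}(C)=\mathcal{R}(CAB)$ by elementwise computation, and concludes by citing the operator-theoretic criterion of Deng--Liu (\cite[Theorem 4.1]{deng}), which takes exactly these two equalities as input. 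You instead read the projections as topological direct sums $X=\mathcal{R}(B)\oplus\mathcal{N}(CA)$ and $X=\mathcal{R}(AB)\oplus\mathcal{N}(C)$ to get the same two inclusions $\mathcal{N}(CAB)\subseteq\mathcal{N}(B)$ and $\mathcal{R}(C)\subseteq\mathcal{R}(CAB)$, and then convert them into the algebraic factorizations $B=B(CAB)^{-}CAB\in\mathcal{B}(X)\,CAB$ and $C=CAB(CAB)^{-}C\in CAB\,\mathcal{B}(X)$ using a bounded inner inverse of $CAB$, so that the purely ring-theoretic \cite[Lemma 1]{Chen} applies --- the same lemma the paper already uses inside the proof of Theorem \ref{polarity}. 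Both routes are valid; yours has the advantage of staying within the algebraic framework of Section 2 and making explicit where the regularity hypothesis on $CAB$ is consumed (to produce the bounded factors), at the cost of an extra verification that $(CAB)^{-}CAB$ and $CAB(CAB)^{-}$ are idempotents with the right kernel and range, which the citation of \cite[Theorem 4.1]{deng} lets the paper skip.
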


\begin{proof}

 (1) $\Leftrightarrow (2) $ : By Theorem \ref{polarity}. \\
$(2) \Rightarrow (3) $ : \\
	$ i)$ From $P \in B\mathcal{B}(X) CA$ we have $$\mathcal{R}(P) \subseteq \mathcal{R}(B).$$
Also from $PB=B$, we see $\mathcal{R}(B)\subseteq \mathcal{R}(P)$. Hence $$\mathcal{R}(P) =\mathcal{R}(B).$$
 $ii) \;  CQ=C \Rightarrow \mathcal{N}(Q) \subseteq \mathcal{N}(C)$. \\
Now let $x\in X$ such that $Cx=0$. Since $Q\in AB\mathcal{B}(X)C$, then $Qx=0$ and so $\mathcal{N}(C)\subseteq \mathcal{N}(Q)$, thus $$\mathcal{N}(Q)=\mathcal{N}(C).$$
 $ iii) \; Q \in AB\mathcal{B}(X)C \Rightarrow \mathcal{R}(Q) \subseteq \mathcal{R}(AB)$. 
Also $QAB=AB \Rightarrow \mathcal{R}(AB) \subseteq \mathcal{R}(Q).$ 
$$\Rightarrow \mathcal{R}(Q)=\mathcal{R}(AB).$$
 $ iv) \; CAP=CA \Rightarrow \mathcal{N}(P)\subseteq \mathcal{N}(CA)$  
and $P\in B\mathcal{B}(X)CA \Rightarrow \mathcal{N}(CA) \subseteq \mathcal{N}(P)$ 
$$\Rightarrow \mathcal{N}(P)=\mathcal{N}(CA).$$

$ (3) \Rightarrow (1) $ : To show that $A$ is $(B,C)$-invertible, it suffices to prove that $\mathcal{N}(B)=\mathcal{N}(CAB)$ and $\mathcal{R}(C)=\mathcal{R}(CAB)$ by  virtue of \cite[Theorem 4.1]{deng}.
\smallskip

At first we can see that  $PB=B$ and $CQ=C.$ Indeed, let $x\in X.$ As $Bx\in \mathcal{R}(B) =\mathcal{R}(P)$ then $P(Bx)=Bx$ and so $PB=B$.\\
 We have $Cx=C(x-Q(x))+CQx)=0+CQx =CQx$ as $x-Qx \in \mathcal{N}(Q)=\mathcal{N}(C)$. Hence $C=CQ$. \smallskip

 Obviously we have $\mathcal{R}(CAB) \subseteq \mathcal{R}(C)$. Let $y\in \mathcal{R}(C)$ then there exists some  $ x \in X $ such that $y=Cx$. As $CQ=C$ we get $y=CQx$ and we can  write $y=Cz$ for some $z=Qx \in \mathcal{R}(Q) =\mathcal{R}(AB)$, so $z=ABt$ for some $t\in X$. Thus we obtain $y=Cz=CABt $ which implies that $y\in \mathcal{R}(CAB)$. Hence $\mathcal{R}(C) \subseteq \mathcal{R}(CAB)$ and consequently $$\mathcal{R}(C)=\mathcal{R}(CAB).$$ 

In the other side, we have obviously $\mathcal{N}(B) \subseteq \mathcal{N}(CAB)$. Suppose that $s\in \mathcal{N}(CAB)$, then $CABs=0$, set $z=Bs$, so we get $CAz=0$ which means that $z\in \mathcal{N}(CA)=\mathcal{N}(P)$, which implies $Pz=0 =PBs$. Since $PB=B$, we obtain $Bs=0$ which gives $s \in \mathcal{N}(B)$. Hence $\mathcal{N}(CAB) \subseteq \mathcal{N}(B)$, and we conclude that $$\mathcal{N}(B)=\mathcal{N}(CAB).$$ Therefore $A$ is $(B,C)$-invertible.
\end{proof}

\begin{remark}
Assume that $A$ is $(B,C)$-invertible. Then with respect the decomposition $X=\mathcal{R}(P)\oplus\mathcal{N}(P)$ and $X=\mathcal{R}(Q)\oplus\mathcal{N}(Q)$  we have the following  matrix representation of $A$:

	$$\begin{array}{rcl}
		A=\left[
		\begin{array}{cc}
			QA&QA\\
			(I-Q)A&(I-Q)A\\
		\end{array}
		\right] : \;  \mathcal{R}(P)\oplus\mathcal{N}(P)  &\longrightarrow& \mathcal{R}(Q)\oplus \mathcal{N}(Q) \\
		x_{1}+x_{2} &\longmapsto & A(x_{1}+x_{2})
	\end{array} $$ 
Indeed, we respect to the Priece decomposition we  have 
$$
	A=\left[
	\begin{array}{cc}
		QAP&QA(I-P)\\
		(I-Q)AP&(I-Q)A(I-P)\\
	\end{array}
	\right].
$$
Then for $x_{1}\in \mathcal{R}(P)$, $x_{2}\in \mathcal{N}(P)$ we have 
\begin{itemize}
	\item 
	$\begin{array}{rcl}
		QAP: \; \mathcal{R}(P) &\to&\mathcal{R}(Q) \\
		x_{1} &\mapsto &QAPx_{1}=QAx_{1}  \; ;
	\end{array} $

	\item 
	$\begin{array}{rcl}
		(I-Q)AP: \; \mathcal{R}(P) &\to&\mathcal{N}(Q) \\
		x_{1} &\mapsto &(I-Q)APx_{1}=(I-Q)Ax_{1}  \; ;
	\end{array} $

	\item 
	$\begin{array}{rcl}
		QA(I-P): \; \mathcal{N}(P) &\to&\mathcal{R}(Q) \\
		x_{2} &\mapsto &QAx_{2}  \; ;
	\end{array} $

	\item 
	$\begin{array}{rcl}
		(I-Q)A(I-P): \; \mathcal{N}(P) &\to&\mathcal{N}(Q) \\
		x_{2} &\mapsto &(I-Q)Ax_{2}.
	\end{array} $ 
\end{itemize}
\end{remark}

\begin{corollary}
	Let $A$ and $B \; \in \mathcal{B}(X)$ such that $B$ is regular. Then the following assertions are equivalent : 
	\begin{enumerate}
		\item $A$ is invertible along $B$ ;
		\item $A$ is polar along $B$ ;
		\item There exists  a projection $P \in \mathcal{B}(X)$ such that \smallskip
		
		\indent $i) \; \mathcal{N}(P)=\mathcal{N}(BA)=\mathcal{N}(B)$ ; \smallskip 
		
		\indent $ii) \; \mathcal{R}(P)=\mathcal{R}(AB)=\mathcal{R}(B)$.
		\item $\mathcal{R}(B)$ is closed and a complemented subspace of $X$, $\mathcal{R}(AB)$ is closed such that $X=\mathcal{R}(AB)\oplus \mathcal{N}(B)$ and the operator $A|\mathcal{R}(B)\,:\,\mathcal{R}(B)\rightarrow \mathcal{R}(AB)$ is invertible.		
		
	\end{enumerate} 
	
\end{corollary}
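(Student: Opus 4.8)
The plan is to obtain this as the $C=B$ specialization of Theorem \ref{lagh1}, so that the bulk of the work is already available and only the reduction from the two-sided $(B,C)$ data to the one-sided ``along $B$'' data is new. I would first settle $(1)\Leftrightarrow(2)$ with no extra effort: invertibility along $B$ is precisely $(B,B)$-invertibility, and by Proposition \ref{propab} polarity along $B$ is precisely $(B,B)$-polarity, so the equivalence is Theorem \ref{polarity} read with $b=c=B$. I would also check at the start that Theorem \ref{lagh1} applies here: $B$ is regular by hypothesis, and once $A$ is invertible along $B$ the element $CAB=BAB$ is regular by Lemma \ref{regular}; thus the standing assumption that $B$, $C$ and $CAB$ are regular in Theorem \ref{lagh1} holds with $C=B$.

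Feeding $C=B$ into Theorem \ref{lagh1}(3) produces projections $P,Q$ with $\mathcal{R}(P)=\mathcal{R}(B)$, $\mathcal{N}(P)=\mathcal{N}(BA)$, $\mathcal{R}(Q)=\mathcal{R}(AB)$ and $\mathcal{N}(Q)=\mathcal{N}(B)$, the natural representatives being $P=A^{\|B}A$ and $Q=AA^{\|B}$. The hard part will be collapsing this pair into the single projection of assertion $(3)$, that is, justifying the two chains $\mathcal{N}(P)=\mathcal{N}(BA)=\mathcal{N}(B)$ and $\mathcal{R}(P)=\mathcal{R}(AB)=\mathcal{R}(B)$. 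This is exactly where the hypothesis $b=c$ must be pushed further than Theorem \ref{lagh1} uses it, since a priori $P=A^{\|B}A$ and $Q=AA^{\|B}$ are different projections; the crux is to relate $\mathcal{R}(AB)$ to $\mathcal{R}(B)$ and $\mathcal{N}(BA)$ to $\mathcal{N}(B)$ using the defining relations $YAB=B=BAY$ of $Y=A^{\|B}$ together with $Y\in B\mathcal{B}(X)\cap\mathcal{B}(X)B$. I expect this bookkeeping on ranges and kernels, rather than any deep new idea, to be the genuine obstacle, and I would treat it as the technical core of the proof.

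For the geometric equivalence with $(4)$ I would lean on the regularity criterion recalled before Theorem \ref{lagh1} (range closed and complemented, kernel complemented). The existence of a bounded projection with range $\mathcal{R}(B)$ is synonymous with $\mathcal{R}(B)$ being closed and complemented, and the clause $X=\mathcal{R}(AB)\oplus\mathcal{N}(B)$ is synonymous with the existence of a projection onto $\mathcal{R}(AB)$ along $\mathcal{N}(B)$; so the only substantive content of $(4)$ beyond $(3)$ is the invertibility of the compression $A|\mathcal{R}(B)\colon\mathcal{R}(B)\to\mathcal{R}(AB)$. In the direction $(1)\Rightarrow(4)$ this is immediate: for $w=Bs\in\mathcal{R}(B)$ and $z=ABt\in\mathcal{R}(AB)$ the identities $YAB=B$ and $BAY=B$ give $YA\,w=w$ and $AY\,z=z$, so $Y=A^{\|B}$ restricts to a two-sided inverse of $A|\mathcal{R}(B)$. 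Conversely, from the two complementations and an invertible compression I would build a candidate $(B,B)$-inverse by composing $(A|\mathcal{R}(B))^{-1}$ with the projection onto $\mathcal{R}(AB)$ along $\mathcal{N}(B)$, and verify the three defining relations of a $(B,B)$-inverse, which returns us to $(1)$ and closes the cycle.
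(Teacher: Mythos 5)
Your reduction of $(1)\Leftrightarrow(2)$ to Proposition \ref{propab} and Theorem \ref{polarity}, and your treatment of $(1)\Leftrightarrow(4)$, are sound and essentially what the paper does. The problem is the step you explicitly defer as ``the technical core'': collapsing the two projections $P=A^{\|B}A$ and $Q=AA^{\|B}$ supplied by Theorem \ref{lagh1} into the single projection of assertion $(3)$, i.e.\ proving $\mathcal{R}(AB)=\mathcal{R}(B)$ and $\mathcal{N}(BA)=\mathcal{N}(B)$. You never carry this out, and in fact it cannot be carried out: these identities fail for operators $A$ that are invertible along $B$. Take $X=\mathbb{C}^2$, $B=\left(\begin{smallmatrix}1&0\\0&0\end{smallmatrix}\right)$ and $A=\left(\begin{smallmatrix}1&0\\1&0\end{smallmatrix}\right)$. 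Then $BAB=B$, so $Y=B$ satisfies $YAB=B=BAY$ and $Y\in B\mathcal{B}(X)\cap\mathcal{B}(X)B$; hence $A$ is invertible along $B$ with $A^{\|B}=B$. Yet $\mathcal{R}(AB)=\operatorname{span}\{(1,1)^{T}\}\neq\operatorname{span}\{e_1\}=\mathcal{R}(B)$, so no single projection can have range equal to both, and $(3)$ fails while $(1)$, $(2)$ and $(4)$ hold. (Similarly $\mathcal{N}(BA)=\mathcal{N}(B)$ fails for $A=\left(\begin{smallmatrix}1&1\\0&0\end{smallmatrix}\right)$.) So the obstacle you flagged is not bookkeeping; it is a genuine obstruction, and the proof cannot be completed in the form you propose.

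To be fair, the paper's own proof is no more explicit here: it asserts that $(1)\Leftrightarrow(2)\Leftrightarrow(3)$ ``follows from Theorem \ref{lagh1}'', and its argument for $(3)\Rightarrow(4)$ then uses $\mathcal{R}(AB)=\mathcal{R}(P)=\mathcal{R}(B)$. What Theorem \ref{lagh1} actually yields at $C=B$ is the pair of (generally distinct) projections you named, with $\mathcal{R}(P)=\mathcal{R}(B)$, $\mathcal{N}(P)=\mathcal{N}(BA)$, $\mathcal{R}(Q)=\mathcal{R}(AB)$, $\mathcal{N}(Q)=\mathcal{N}(B)$. Assertion $(3)$ should either retain both projections or be restated for the single projection $Q$ with $\mathcal{R}(Q)=\mathcal{R}(AB)$ and $\mathcal{N}(Q)=\mathcal{N}(B)$; note that this is precisely the projection the paper constructs in the direction $(4)\Rightarrow(2)$, and it matches the decomposition $X=\mathcal{R}(AB)\oplus\mathcal{N}(B)$ in $(4)$. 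With $(3)$ so amended, your plan (and the paper's) goes through; as written, the step you postponed is where the argument breaks.
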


\begin{proof}
	The equivalence between 1), 2) and 3) follows from Theorem \ref{lagh1}. The equivalence between 1) and 4) is \cite[Theorem 2]{oper}, however we can give another proof by showing that 2) or 3) is equivalent to 4). 
\smallskip

	Indeed, assume that $A$ is polar along $B$, then by (3) $\mathcal{R}(B)=\mathcal{R}(P)$ which is closed and complemented in $X$ since $P$ is a bounded projection. Also $\mathcal{R}(AB)=\mathcal{R}(P)$ is closed and $X=\mathcal{R}(P)\oplus \mathcal{N}(P)=\mathcal{R}(AB)\oplus \mathcal{N}(B)$.
\smallskip

	The operator $A|\mathcal{R}(B)$ is surjective by construction. So let $x\in \mathcal{R}(B)$ such that $ABx=0$, then $BABx=0$ and hence $Bx\in\mathcal{N}(BA)=\mathcal{N}(P)$ by (3). Thus $0=PBx=Bx$. Therefore, $A|\mathcal{R}(B)$ is injective.
\smallskip
	
	Conversely, assume that (4) holds. Let $P$ be the bounded projection onto $\mathcal{R}(AB)$. Let $x\in X$ with $x=x_1+x_2$ such that $x_1\in\mathcal{R}(AB)=\mathcal{R}(P)$ and $x_2\in \mathcal{N}(B)=\mathcal{N}(P).$ Then $$BPx=Bx_1=B(x_1+x_2)=Bx.$$
	So $BP=B$. Also
	$$ABPx=ABx=ABx_1=PABx_1=PABx.$$
	Hence $P\in comm(AB)$. 
\smallskip

	Now to deduce that $A$ is dually polar along $B$, it remains to show that $AB+I-P$ is invertible. We have $(AB+I-P)x=ABx_1+x_2$. Then if $(AB+I-P)x=0$, we get $ABx_1=0$ and $x_2=0$. Since the operator $A|\mathcal{R}(B)$ is invertible, we deduce that $x_1=0$. Thus $AB+I-P$ is injective. 
\smallskip

	Let $y=y_1+y_2$ such that $y_1\in\mathcal{R}(AB)$ and $y_2\in \mathcal{N}(B).$ Then $y_1=ABx=ABx_1$ for some $x\in X$. Set $z=x_1+y_2$, then $(AB+I-P)z=ABx_1+y_2=y$. Hence $AB+I-P$ is surjective. Therefore $A$ is dually polar along $B$ and so $A$ is polar along $B$.
\end{proof}

\noindent{\bf Date Availability Statement:} No data were produced for this paper.\\
{\bf Conflict of interest:} The authors declare that they have no conflict of interest.

\end{document}